\theoremstyle{definition}
\newtheorem{example}{Example}
\newtheorem{remark}{Remark}
\newtheorem{definition}{Definition}
\newtheorem{assumption}{Assumption}
\theoremstyle{plain}
\newtheorem{theorem}{Theorem}
\newtheorem{lemma}{Lemma}
\newtheorem{corollary}{Corollary}
\newtheorem{proposition}{Proposition}
\Crefname{assumption}{assumption}{assumptions}
\DeclareMathOperator{\diag}{diag}
\DeclareMathOperator{\rank}{rank}
\crefname{equation}{}{}
\title{\LARGE Certifying the absence of spurious local minima at infinity}
\begin{document}

\author{\large C\'edric Josz\thanks{\url{cj2638@columbia.edu}, IEOR, Columbia University, New York. Research supported by NSF EPCN grant 2023032 and ONR grant N00014-21-1-2282.} \and Xiaopeng Li\thanks{\url{xl3040@columbia.edu}, IEOR, Columbia University, New York.}}
\date{}

\maketitle

\begin{center}
    \textbf{Abstract}
    \end{center}
    \vspace*{-3mm}
 \begin{adjustwidth}{0.2in}{0.2in}
~~~~When searching for global optima of nonconvex unconstrained optimization problems, it is desirable that every local minimum be a global minimum. This property of having no spurious local minima is true in various problems of interest nowadays, including principal component analysis, matrix sensing, and linear neural networks. However, since these problems are non-coercive, they may yet have spurious local minima at infinity. The classical tools used to analyze the optimization landscape, namely the gradient and the Hessian, are incapable of detecting spurious local minima at infinity. In this paper, we identify conditions that certify the absence of spurious local minima at infinity, one of which is having bounded subgradient trajectories. We check that they hold in several applications of interest.
\end{adjustwidth} 
\vspace*{3mm}
\noindent{\bf Keywords:} global optimization, Morse-Sard theorem, subgradient trajectories.

\section{Introduction}\label{sec:introduction}

The idea that the absence of spurious local minima alone does not guarantee the success of first-order methods was first expressed in the context of binary classification in the mid-nineties. It was shown that gradient trajectories are bounded if the objective function satisfies several technical conditions tailored to the problem at hand \cite[Theorems 3.6-3.8]{blackmore1996local}. This property was referred to as having no attractors at infinity. More recently, it was proved that adding an exponential neuron to a wide class of neural networks eliminates all spurious local minima \cite{liang2018adding}, but it was soon realized that this procedure simply sends them to infinity \cite{sohl2019eliminating}. These results suggest that besides spurious local minima, a certain notion of spurious local minima at infinity also affects the convergence of first-order methods to global optima. However, the current optimization literature lacks a precise definition of local minima at infinity, and, accordingly, there is little theoretical understanding of them. Worse still, classical tools for landscape analysis, such as the gradient and the Hessian, cannot detect spurious local minima at infinity even in simple scenarios (see \Cref{eg:infty-spurious}), let alone handle nonsmooth functions without a gradient.

\begin{example}
\label{eg:infty-spurious}
Consider an instance of matrix completion problem, i.e., minimize
\begin{equation*}
    f(x_1,x_2,y_1,y_2) := (x_1y_1-1)^2+(x_2y_1-1)^2+(x_2y_2-1)^2. 
\end{equation*}
By solving $\nabla f(x_1,x_2,y_1,y_2)=0$, the set of critical points of $f$ can be decomposed into four connected components: 
\begin{gather*}
    C_1 = \{(x_1,x_2,y_1,y_2) = (t,t,1/t,1/t)\,\vert\,t\in\mathbb{R}\setminus \{0\} \}, \\
    C_2 = \{(x_1,x_2,y_1,y_2) = (t,0,1/t,-1/t)\,\vert\,t\in\mathbb{R}\setminus \{0\}\}, \\
    C_3 = \{(x_1,x_2,y_1,y_2) = (t,-t,0,-1/t)\,\vert\,t\in\mathbb{R}\setminus \{0\}\}, \\
    C_4 = \{(x_1,x_2,y_1,y_2) = (0,0,0,0)\}. 
\end{gather*}
The critical values are $f(C_1)=\{0\}$, $f(C_2)=f(C_3)=\{2\}$, and $f(C_4)=\{3\}$. Furthermore, $C_1$ is the set of global minima, and by computing the Hessian $\nabla^2 f$, we find that it has positive and negative eigenvalues at all points in $C_2$, $C_3$ and $C_4$. Therefore, $f$ has no spurious local minima and all saddle points are strict \cite[Definition 2]{lee2016}. One would expect first-order methods like gradient descent to converge to a global minimum for almost all initial points \cite[Theorem 11]{lee2016}. However, the numerical experiments in \Cref{fig:spurious_inf} show otherwise. This is because the function is not coercive.
\begin{figure}[htbp]
    \centering
    \includegraphics[width=.6\textwidth]{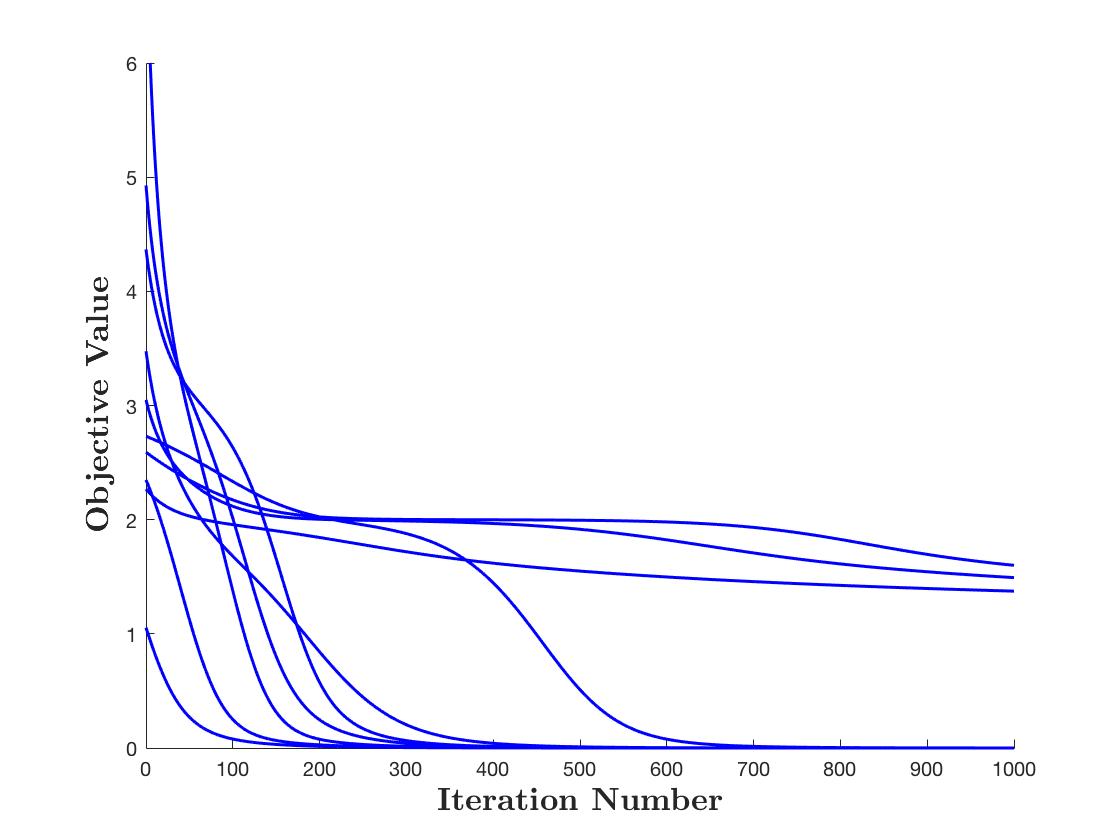}
    \caption{Gradient method initialized uniformly at random in $[-1,1]^4$ with constant step size $0.01$ sometimes gets stuck at a spurious local minimum at infinity (3 among 10 trails in the experiment).}
    \label{fig:spurious_inf}
\end{figure}
\end{example}

Two newly proposed concepts related to spurious local minima at infinity are setwise local minima \cite{joszneurips2018} and spurious valleys \cite{venturi2019spurious}. Setwise local minima \cite[Definition 2.5]{joszneurips2018} generalize the notion of local minima from points to compact sets. The first author and co-authors recently established that the uniform limit (on all compact subsets) of a sequence of continuous functions which are devoid of spurious setwise local minima is itself devoid of spurious strict setwise local minima \cite[Proposition 2.7]{joszneurips2018}. However, due to the boundedness assumption, setwise local minima cannot be directly used to study spurious local minima at infinity. Spurious valleys \cite[Definition 1]{venturi2019spurious} do have the potential to handle spurious local minima at infinity but they fail to detect them when there are flat regions, such as in the ReLU network with one-hidden layer $(x_1,x_2) \mapsto (x_2\max\{x_1,0\}-1)^2$ (see \Cref{fig:relu}). Spurious valleys also rely on the notion of path-connectedness, which is actually not necessary for defining spurious local minimum at infinity. In this paper, we extend the concept of setwise local minima by relaxing the boundedness assumption. This enables us to define spurious local minima at infinity as unbounded setwise local minima over which the infimum of the objective function is greater than the global infimum. It also allows us to handle classical spurious local minima and flat regions in a unified way. 

\begin{figure}[htbp]
    \centering
    \includegraphics[width=.6\textwidth]{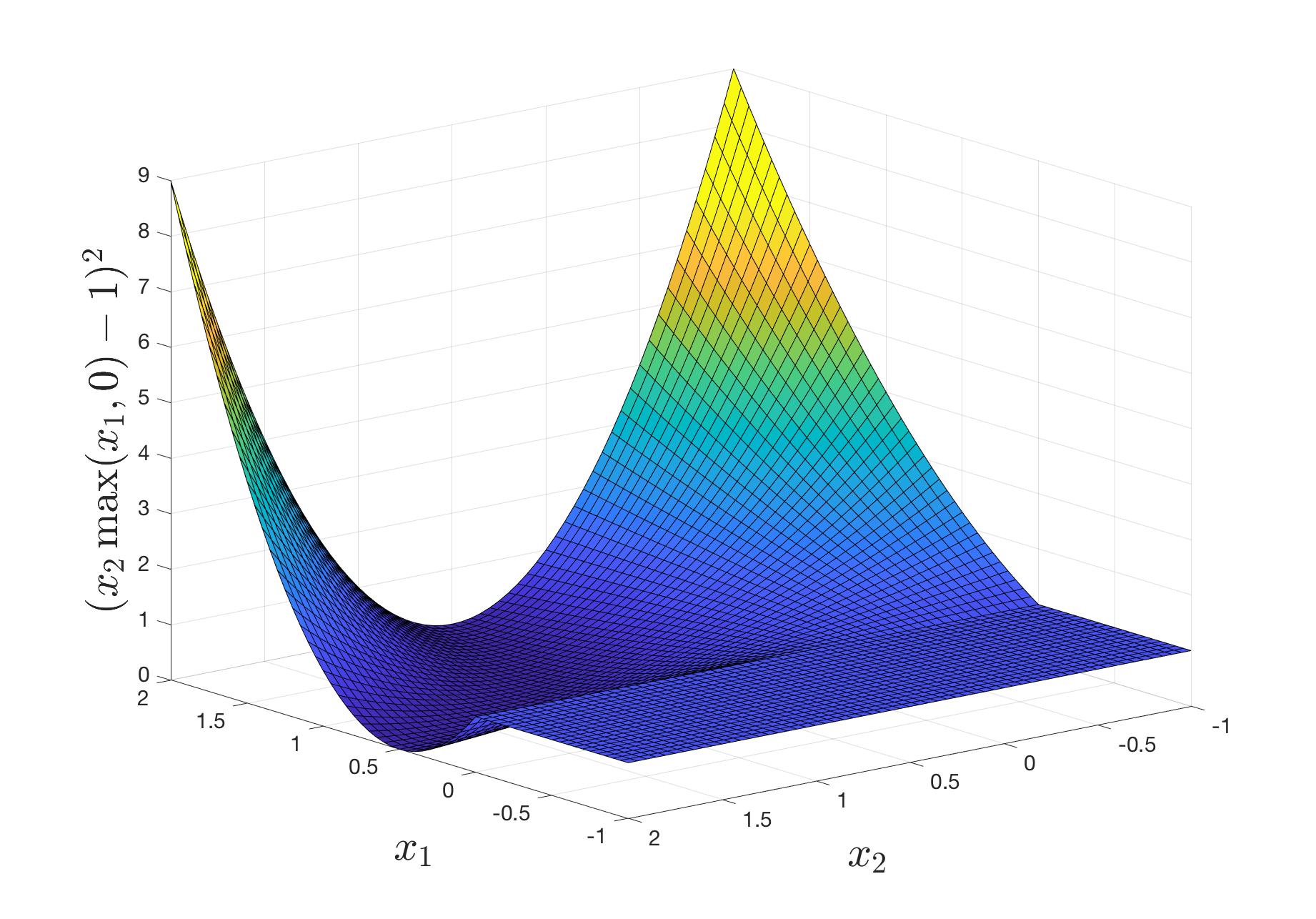}
    \caption{Function devoid of spurious valleys containing a spurious local minimum at infinity.}
    \label{fig:relu}
\end{figure}

An existing strategy to analyze the landscape of non-coercive functions is to construct a strictly decreasing path to a global minimum from any initial point. Such a path was shown to exist in half-rectified neural network \cite{freeman2017topology}. This strategy is used to prove the existence of spurious local minima in neural networks with almost all nonlinear activations \cite{ding2022suboptimal}. It also explains the phase transition from the existence of sub-optimal basins in narrow networks to their disappearance in wide networks \cite{li2022benefit}. Finally, it is used to prove the absence of spurious valleys for over-parametrized one-hidden layer neural network \cite{venturi2019spurious}. However, such a strategy needs to be tailored to each application since one needs to select a particular path for each specific loss function. In this paper, we instead develop a theory allowing one to use a common decreasing path - subgradient trajectory - to analyze the landscape in various different contexts. We can then rule out spurious setwise local minima (and thus those at infinity) for a general class of functions. Our main result is as follows.

\begin{theorem}\label{thm:main-intro}
Suppose a locally Lipschitz function is bounded below, admits a chain rule, has finitely many critical values, and has bounded subgradient trajectories. Then it has no spurious local minima if and only if it has no spurious setwise local minima. 
\end{theorem}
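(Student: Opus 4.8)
One implication is immediate: a singleton is compact, so every spurious local minimum is in particular a spurious setwise local minimum; hence the absence of spurious setwise local minima forces the absence of spurious local minima, and this direction uses none of the four hypotheses. I would prove the converse by contraposition. Suppose $f$ admits a spurious setwise local minimum $S$, and let $U \supseteq S$ be the open neighbourhood supplied by the definition, on which $f$ does not drop below $c := \inf_S f$; here $c > \inf_{\mathbb R^n} f$. The goal is to produce a genuine point that is a local minimum with value $c$, hence spurious. When $c$ is attained on $S$ this is essentially immediate (the attaining point, or a short descent from it, works), so the substance is the ``at infinity'' situation, where $S$ is unbounded and $c$ is approached only at infinity and no point of $S$ is a priori a candidate.

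The plan is to descend from $S$ along subgradient trajectories. Pick $x_k \in S$ with $f(x_k) \downarrow c$, and let $\gamma_k \colon [0,\infty) \to \mathbb R^n$ be a subgradient trajectory with $\gamma_k(0) = x_k$. The chain rule gives $\tfrac{d}{dt} f(\gamma_k(t)) = -\|\dot\gamma_k(t)\|^2$ a.e., so $t \mapsto f(\gamma_k(t))$ is nonincreasing and the energy spent before any time $T$ equals $f(x_k) - f(\gamma_k(T)) \le f(x_k) - c$, a quantity that tends to $0$. The first key step is a trapping claim: for $k$ large, $\gamma_k$ never leaves $U$. Indeed, as long as $\gamma_k$ stays in $\overline U$ one has $c \le f(\gamma_k(t)) \le f(x_k)$, so crossing out of $\overline U$ would require $f$ to fall a fixed positive amount below $c$ immediately outside it, which is impossible once $f(x_k) - c$ is smaller than that amount. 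Making this robust requires care: one shrinks $U$ to an open $U'$ with $\overline{U'} \subseteq U$ and $S \subseteq U'$, uses that $f \ge c$ on $\overline{U'}$, and applies the trajectory-boundedness hypothesis only after $\gamma_k$ has been confined to the fixed compact set $\overline{U'}$, since that hypothesis is not assumed uniform in $k$.

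Granting the trapping, $\gamma_k$ lives in the bounded set $\overline{U'}$; by the boundedness hypothesis, the chain rule, and $f$ bounded below, its limit set $\Omega_k$ is a nonempty compact connected set of critical points of $f$ on which $f$ is constant, equal to $\ell_k := \lim_{t\to\infty} f(\gamma_k(t)) \in [c, f(x_k)]$. Since $f$ has finitely many critical values, $\ell_k$ ranges over a finite set while $f(x_k) \downarrow c$ squeezes it, so $\ell_k = c$ for $k$ large; in particular $c$ is a critical value and $\Omega_k$ consists of critical points at level $c$ contained in $\overline{U'} \subseteq U$. Finally, any $z \in \Omega_k$ satisfies $z \in U$ (an open neighbourhood of $z$ on which $f \ge c$) and $f(z) = c > \inf_{\mathbb R^n} f$, so $z$ is a spurious local minimum, contradicting the hypothesis. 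This completes the contraposition.

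I expect the main obstacle to be the trapping step, i.e.\ showing the descent trajectories cannot leak out of the neighbourhood of $S$ before their values settle near $c$ --- complicated by the non-uniformity of the trajectory bound in $k$ and by the possibly unbounded geometry of $S$. The device that makes it work is the vanishing energy budget $f(x_k) - c \to 0$ combined with $f \ge c$ on the closure of the shrunken neighbourhood. The remaining ingredients (existence and absolute continuity of subgradient trajectories, the energy identity from the chain rule, compactness and connectedness of limit sets, and the reduction of limiting values to critical values via finiteness) are standard consequences of the stated hypotheses.
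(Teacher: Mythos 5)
Your contraposition strategy --- descend along subgradient trajectories from $x_k\in S$ with $f(x_k)\downarrow c:=\inf_S f$, trap the trajectories near $S$, and extract a critical point at level $c$ via finiteness of critical values --- is a genuinely different route from the paper's (which proves the contrapositive directly by locating the minimum critical value inside $S$ and, if that point were not a local minimum, constructing an auxiliary setwise local minimum $S\cap[f\leqslant (f(s_0)+f(x^*))/2]$ devoid of critical points to force a contradiction). However, your trapping step fails as argued. You claim that leaving $\overline{U'}$ would require $f$ to drop a fixed amount below $c$ immediately outside, contradicting $f(x_k)-c\to 0$. That is false: $f\geqslant c$ on all of $U$, and $U$ is an open neighbourhood of $\overline{U'}$, so $f$ cannot drop below $c$ just outside $\overline{U'}$; nothing forces a value drop at escape. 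The mechanism that actually works is different and relies on the boundary behaviour from \Cref{lem:bdd-const}: one has $\partial U'\subseteq U\setminus S$ (since $S\subseteq U'$ open and $\overline{U'}\subseteq U$), and by the setwise-local-minimum inequality $f\geqslant \sup_S f$ there. If $\sup_S f=c$, then $f\equiv c$ on $S$ and every point of $S$ is already a spurious local minimum, so one may assume $\sup_S f>c$; then once $f(x_k)<\sup_S f$ (which also forces $x_k\in S^\circ$), any first hitting time $t^*$ of $\partial U'$ would give $\sup_S f\leqslant f(\gamma_k(t^*))\leqslant f(x_k)<\sup_S f$, a contradiction. This is the paper's connectedness/boundary argument transplanted from $\partial S$ to $\partial U'$; your ``vanishing energy budget'' device plays no role in it.

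Separately, you twice call $\overline{U'}$ a ``fixed compact set.'' It is not: $S\subseteq U'$ and $S$ is allowed to be unbounded --- that is precisely the local-minimum-at-infinity case the theorem is designed to handle --- so $\overline{U'}$ is unbounded. The compactness needed for the $\omega$-limit-set argument has to come from the bounded-subgradient-trajectory hypothesis itself: select the bounded trajectory that the hypothesis guarantees at $x_k$, run the (corrected) trapping argument for that trajectory, and only then is $\Omega_k$ a nonempty compact connected set of critical points inside $\overline{U'}\subseteq U$ with constant value $\ell_k\in[c,f(x_k)]$. With these two repairs your finiteness-of-critical-values squeeze $\ell_k=c$ and the final extraction of a spurious local minimum at level $c$ go through, so the strategy is salvageable, but as written the proof has a genuine gap at its acknowledged crux.
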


The above statement is meant to help readers get a first taste of our main result in this paper. Precise definitions and a detailed mathematical background will be given in the main body\footnote{The terminology in the theorem will be given in \Cref{def:locally-lipschitz} (locally Lipschitz), \Cref{def:chain-rule} (chain rule), \Cref{def:bdd-subgrad} (bounded subgradient trajectories), and \Cref{def:setwise-local-min} (setwise local minimum).}, along with a discussion on the role of its assumptions. Let us mention already that two of its assumptions, namely those regarding the chain rule and critical values, automatically hold for functions definable in an o-minimal expansion of the real field \cite{van1998tame} (by \cite[Proposition 2 (iv)]{bolte2020conservative} and the definable Morse-Sard theorem \cite[Corollary 9 (ii)]{bolte2007clarke}). This includes semi-algebraic, globally subanalytic, and log-exp functions, and importantly, many applications of interest nowadays \cite[Section 4.1]{bolte2020conservative}. The locally Lipschitz and lower bounded assumptions usually come for free in applications, so that in practice the sole assumption that one needs to check for is that subgradient trajectories are bounded. \Cref{thm:main-intro} thus serves as a handy device to conclude that there are no spurious setwise local minima for a family of functions that are widely used in machine learning, especially in deep neural networks and matrix sensing. We summarize the problems that we are going to consider in the following corollary. 

\begin{corollary}\label{cor:main}
The following problems have no spurious local minima at infinity: 
\begin{enumerate}
    \item deep linear neural network 
    $$\inf_{W_1,\ldots,W_L} ~~~ \lVert W_L\cdots W_1X - Y\rVert_F^2;$$
    \item one dimensional deep neural network with sigmoid activation function $\sigma$
    $$\inf_{w_1,\ldots,w_L} ~~~ (w_L\sigma(w_{L-1}\cdots \sigma(w_1x))-y)^2;$$
    \item matrix recovery with restricted isometry property (RIP)
    $$\inf_{X,Y} ~~~ \frac{1}{2m}\sum_{i=1}^m(\langle A_i,XY^{\rm T}\rangle_F-b_i)^2;$$
    \item nonsmooth matrix factorization where $\rank(M) = 1$ and $M_{ij} \neq 0$
    $$ \inf_{x,y} ~~~ \sum_{i=1}^m \sum_{j=1}^n \lvert x_iy_j-M_{ij} \rvert.$$
\end{enumerate}
\end{corollary}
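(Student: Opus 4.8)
The plan is to deduce \Cref{cor:main} from \Cref{thm:main-intro}: for each of the four objectives I would verify the four hypotheses of the theorem together with the absence of spurious (pointwise) local minima, so that the theorem yields the absence of spurious setwise local minima, and in particular of the unbounded ones, which are by definition the spurious local minima at infinity.

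Several hypotheses can be dispatched at once. Each objective is a finite sum of squares (problems 1--3) or of absolute values (problem 4), hence nonnegative and so bounded below by $0$; and each is locally Lipschitz, being $C^{\infty}$ in problems 1--3 (the sigmoid $\sigma(t)=1/(1+e^{-t})$ is smooth) and a composition of polynomials with the $1$-Lipschitz absolute value in problem 4. For the chain rule and the finiteness of the set of critical values I would invoke the definability remark made right after \Cref{thm:main-intro}: objectives 1, 3, 4 are semialgebraic, while objective 2 is definable in the o-minimal structure $\mathbb{R}_{\exp}$ because $\sigma$ is a log-exp function; hence by \cite[Proposition 2 (iv)]{bolte2020conservative} and the definable Morse--Sard theorem \cite[Corollary 9 (ii)]{bolte2007clarke} both hypotheses hold. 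It thus remains, for each problem, to establish (i) that there are no spurious local minima and (ii) that every subgradient trajectory is bounded.

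Point (i) is known or elementary in each case. For the deep linear network every local minimum of the squared Frobenius loss is a global minimum, with no assumption needed on the data. For matrix recovery it is the no-spurious-local-minima theorem valid once the RIP constant lies below the appropriate threshold. For the scalar sigmoid network and the rank-one nonsmooth factorization the global infimum equals $0$ --- realized by scaling $w_L$ in problem 2, and by $x=a$, $y=b$ with $M=ab^{\top}$ in problem 4 --- and a case analysis of the (Clarke) critical points, using in problem 4 that all $M_{ij}\neq 0$ (which forces the zero set of $f$ to be the single scaling orbit of $(a,b)$), shows that every local minimum attains the value $0$. Point (ii) is the core. Along any maximal subgradient trajectory $t\mapsto\xi(t)$ the objective is nonincreasing by the chain rule, so $\xi(t)$ stays in the sublevel set $\{f\le f(\xi(0))\}$; the task is to upgrade this into an a priori bound on $\lVert\xi(t)\rVert$, which also precludes finite-time blow-up and yields a trajectory defined and bounded on $[0,\infty)$. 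In each case I would exploit a conservation law of the (sub)gradient dynamics. For problems 1 and 3 the balancedness quantities $W_{k+1}^{\top}W_{k+1}-W_kW_k^{\top}$, respectively $X^{\top}X-Y^{\top}Y$, are constant along the flow, while the sublevel-set bound controls the end-to-end product $\lVert W_L\cdots W_1X\rVert_F$, respectively $\lVert XY^{\top}\rVert_F$ through the RIP lower bound; the two bounds together bound each factor. For problem 4 one checks that $\lVert x\rVert^2-\lVert y\rVert^2$ is conserved along subgradient trajectories (the cross terms cancel), while $f(\xi(t))\le f(\xi(0))$ forces $\lvert x_iy_j\rvert\le B$ for all $i,j$ and hence $\lVert x\rVert\,\lVert y\rVert\le\sqrt{mn}\,B$; a conserved difference of squares together with a bounded product bounds both $\lVert x\rVert$ and $\lVert y\rVert$.

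The step I expect to be the main obstacle is (ii) in the degenerate regimes. For matrix recovery one must confirm that the RIP lower bound is strong enough, given only the crude a priori estimate $f(\xi(0))$, to control $\lVert XY^{\top}\rVert_F$; for the deep linear network one must handle rank-deficient $X$, where the sublevel bound no longer immediately controls the full end-to-end product and one instead uses that $W_1$ is stationary along the flow on the directions annihilated by the data. The most delicate case, however, is the scalar sigmoid network: the gradient can vanish in the saturating regions $\lvert w_k\rvert\to\infty$ and there is no evident conservation law, so ruling out trajectories that drift to infinity should require using the boundedness of $\sigma$ and the monotonicity of $f$ quantitatively, layer by layer along the scalar network. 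A last, bookkeeping-type difficulty is making the conservation-law computation for problem 4 rigorous at the nonsmooth points of $f$, where the sign terms are replaced by elements of the Clarke subdifferential and the cancellation must be re-checked.
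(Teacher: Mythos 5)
Your proposal is correct and takes essentially the same route as the paper: apply \Cref{thm:main-intro}, dispatch boundedness below, local Lipschitzness, the chain rule and finiteness of critical values via nonnegativity, smoothness/composition, and definability in $\mathbb{R}_{\exp}$, then verify the absence of spurious local minima by citing known landscape results and establish bounded (sub)gradient trajectories case by case via conservation laws and sublevel-set bounds. The one place your outline is lighter than the paper's argument is the deep linear network with $L\ge 3$, where the balancedness identity plus a bound on $\lVert W_L\cdots W_1 X\rVert_F$ does not by itself immediately bound each factor; the paper handles this, after the SVD reduction you correctly sketch for rank-deficient $X$, by invoking \cite[Theorem 3.2]{bah2022learning} rather than re-deriving it, and your sigmoid argument is indeed carried out layer by layer exactly as you anticipate it must be.
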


Again, the statement above aims at giving readers some feeling on what type of functions we are considering. More rigorous descriptions of the applications will be given in the main body.

The paper is organized as follows. \Cref{sec:background} contains background material on setwise local minima, the Clarke subdifferential, and subgradient trajectories. \Cref{sec:proofmain} contains the proof of our main result, namely \Cref{thm:main-intro}. Finally, \Cref{sec:applications} contains applications of our main result as delineated in \Cref{cor:main}. 

\section{Background}\label{sec:background}

This section contains prerequisites for the proof of \Cref{thm:main-intro} in the next section. Throughout this paper, unless otherwise specified, we always equip the Euclidean space $\mathbb{R}^n$ with an inner product $\langle \cdot,\cdot\rangle$ and its induced norm $\|\cdot\|:=\sqrt{\langle \cdot,\cdot\rangle}$. 

\subsection{Setwise local minimum}\label{subsec:setwise}

In this subsection, we present the formal definitions and some useful properties of setwise local minimum and local minimum at infinity mentioned in \Cref{sec:introduction}. We first review the classical definition of local and global minima. Throughout this paper, $B(x,\epsilon)$ denotes the open ball centered at $x\in\mathbb{R}^n$ with radius $\epsilon>0$. 

\begin{definition}
\label{def:global-local-min}
A point $x\in\mathbb{R}^n$ is a \textit{local minimum} (respectively, \textit{global minimum}) of a function $f:\mathbb{R}^n\rightarrow\mathbb{R}$ if $f(x)\leqslant f(y)$ for all $y\in B(x,\epsilon)$ for some $\epsilon>0$ (respectively, $y\in\mathbb{R}^n$). A local minimum is \textit{spurious} if it is not a global minimum.
\end{definition}

From \Cref{def:global-local-min}, one can see the definition of a local minimum only considers the landscape of a function at any finite point. To discuss the function landscape at infinity, we generalize the notion of setwise local minimum first proposed in \cite[Definition 2.5]{joszneurips2018}. 

\begin{definition}[Setwise local minimum]\label{def:setwise-local-min}
A nonempty closed subset $S\subset\mathbb{R}^n$ is a \textit{setwise local minimum} of a continuous function $f:\mathbb{R}^n\rightarrow\mathbb{R}$ if there exists an open set $U\subset\mathbb{R}^n$ such that $S\subset U$ and $f(x)\leqslant f(y)$ for all $x\in S$, $y\in U\setminus S$. \end{definition}

It is easy to see that a local minimum is a setwise local minimum by taking $S$ to be a singleton. We also define a \textit{strict setwise local minimum} by replacing $f(x)\leqslant f(y)$ with $f(x)<f(y)$ in \Cref{def:setwise-local-min}. 

\begin{definition}[Setwise global minimum]\label{def:setwise-global-min}
A subset $S$ of $\mathbb{R}^n$ is a \textit{setwise global minimum} of a function $f:\mathbb{R}^n\rightarrow\mathbb{R}$ if $S$ is a setwise local minimum of $f$ and $\inf_{S} f = \inf_{\mathbb{R}^n} f$. 
\end{definition}

Note that $\inf_S f$ is a shorthand for $\inf_{x\in S} f(x)$, and similar for $\sup_S f$ and $\max_S f$. Setwise local minima include setwise global minima as a special case, and we say a setwise local minimum is \textit{spurious} if it is not a setwise global minimum. Note that \Cref{def:setwise-local-min} is not exactly the same as \cite[Definition 2.5]{joszneurips2018} because we do not require a setwise local minimum to be a compact set. In other words, a setwise local minimum can be either bounded or unbounded, and we say a (spurious) setwise local minimum is a (spurious) \textit{local minimum at infinity} if it is unbounded. For example, consider the loss function of a one-hidden layer neural network with sigmoid activation $\sigma$ and two data points $(1,1)$ and $(-1,-3)$ in \Cref{fig:setwise}. One can see that $S$  is a setwise local minimum (in particular, a local minimum at infinity) and $U$ is the corresponding open set in \Cref{def:setwise-local-min}. Finally, observe that $\mathbb{R}^n$ is a strict setwise local minimum at infinity of any function.

\begin{figure}[htbp]
	\centering
	\includegraphics[width=.6\textwidth]{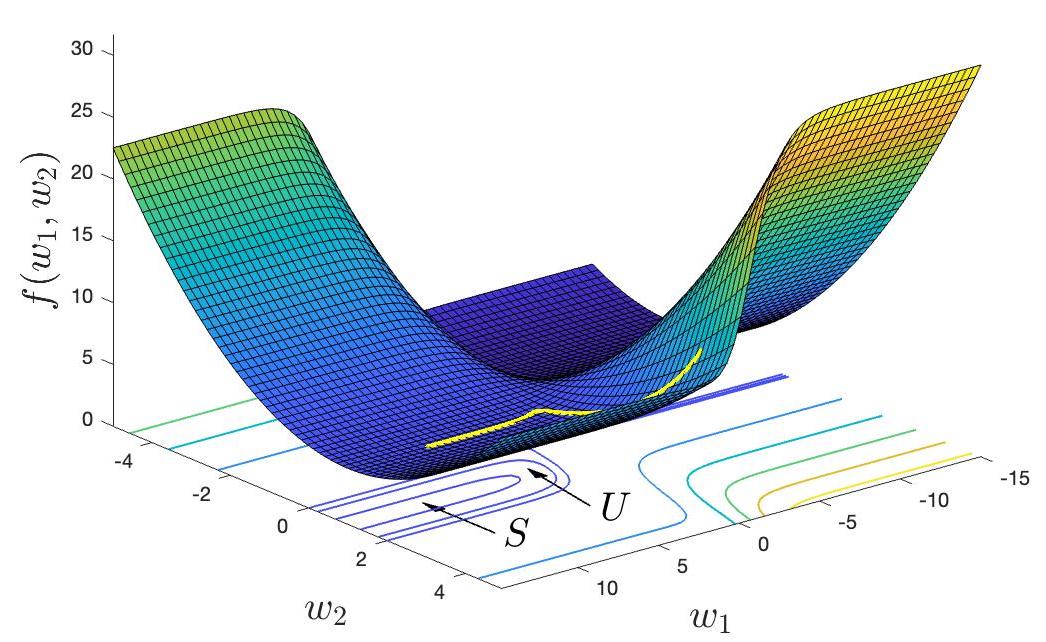}
	\caption{Local minimum at infinity of  $f(w_1,w_2)=\frac{1}{2}[(w_2\sigma(w_1)-1)^2+(w_2\sigma(-w_1)+3)^2]$.}
	\label{fig:setwise}
\end{figure}

Now we introduce one of the most useful properties of setwise local minima in \Cref{lem:bdd-const}. This property is intuitive and will be used in different scenarios throughout this paper. Let $\bar{S}$, $S^\circ$, and $\partial S := \bar{S} \setminus S^\circ$ respectively denote the closure, interior, and boundary of a subset $S$ of $\mathbb{R}^n$.

\begin{lemma}\label{lem:bdd-const}
\label{lemma:boundary}
If $S$ is a setwise local minimum of a continuous function $f:\mathbb{R}^n\rightarrow\mathbb{R}$, then $f(x)=\sup_S f$ for all $x\in\partial S$.
\end{lemma}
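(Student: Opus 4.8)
The plan is to argue by contradiction, exploiting the two inequalities hidden in the definition of a setwise local minimum: the ``$\leqslant$'' that bounds $f$ on $S$ by its values just outside, together with the fact that boundary points are simultaneously limits of points in $S$ and (since they are not interior to $S$) limits of points in the ambient space outside $S$ but inside $U$. Let $x \in \partial S$. Since $S$ is closed, $x \in \bar S = S$, so on the one hand $f(x) \leqslant \sup_S f$ trivially. The work is to show the reverse inequality $f(x) \geqslant \sup_S f$.

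First I would fix the open set $U \supset S$ furnished by \Cref{def:setwise-local-min}, so that $f(s) \leqslant f(y)$ for every $s \in S$ and every $y \in U \setminus S$. Because $x \in \partial S = \bar S \setminus S^\circ$, the point $x$ is not interior to $S$, hence every ball $B(x,\epsilon)$ meets the complement of $S$; on the other hand $x \in U$ and $U$ is open, so for small enough $\epsilon$ we have $B(x,\epsilon) \subset U$, and therefore $B(x,\epsilon) \setminus S \subset U \setminus S$ is nonempty. Shrinking $\epsilon \to 0$ produces a sequence $y_k \to x$ with $y_k \in U \setminus S$. For this sequence, the defining inequality gives $f(s) \leqslant f(y_k)$ for \emph{every} $s \in S$, hence $\sup_S f \leqslant f(y_k)$ for all $k$. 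Letting $k \to \infty$ and using continuity of $f$ yields $\sup_S f \leqslant f(x)$. Combined with the trivial direction, $f(x) = \sup_S f$.

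The one subtlety worth flagging is the step where I claim $B(x,\epsilon)$ meets $\mathbb{R}^n \setminus S$: this is exactly the statement that $x \notin S^\circ$, which is built into $x \in \partial S$, so no separate argument is needed, but it is the only place where the hypothesis ``$x$ is a boundary point'' (rather than merely a point of $S$) actually gets used. Everything else is routine manipulation of open sets and sequential continuity, so I do not anticipate a genuine obstacle; the proof is short. If one prefers to avoid sequences entirely, the same argument runs directly: for each $\epsilon > 0$ small enough that $B(x,\epsilon) \subset U$, pick any $y_\epsilon \in B(x,\epsilon) \setminus S$, note $\sup_S f \leqslant f(y_\epsilon)$, and pass to the limit via continuity. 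Either phrasing completes the proof.
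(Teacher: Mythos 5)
Your proof is correct and follows essentially the same strategy as the paper's: approximate the boundary point from outside $S$ while staying inside $U$, invoke the defining inequality $f(s)\leqslant f(y)$ to bound $\sup_S f$ by $f(y_k)$, and pass to the limit by continuity, with the reverse inequality being immediate from $x\in S$. The paper packages this as a single $\epsilon$-chain involving $\inf_{U\setminus S} f$ rather than splitting the two inequalities, which incidentally also yields $\inf_{U\setminus S} f=\sup_S f$, but the underlying idea is identical.
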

\begin{proof}
See \Cref{app:lem:bdd-const}. 
\end{proof}

It is worth relating our notion of setwise local minimum to the concept of \textit{valley} proposed in \cite[Definition 1]{venturi2019spurious}. A \textit{valley} of a function $f:\mathbb{R}^n\rightarrow\mathbb{R}$ is defined as a path-connected component\footnote{A subset $S$ of $\mathbb{R}^n$ is path-connected if for all $x,y\in S$, there exists a continuous function $\gamma:[0,1]\rightarrow S$ such that 
$\gamma(0)=x$ and $\gamma(1)=y$. A maximal path-connected connected set is called a path-connected component. Path-connected components can be viewed as equivalence classes over a set.} of a sublevel set of $f$. These two definitions are distinct in general. The interval $[-1,1]$ is a setwise local minimum of  $f:\mathbb{R}\rightarrow \mathbb{R}$ defined by $f(x) := 0$ for all $x \in \mathbb{R}$ yet it is not a valley. Conversely, $X:=\{(x_1,x_2) \in (\mathbb{R}\setminus \{0\}) \times \mathbb{R}~\vert~ x_2 = \sin(1/x_1)\}$ is a valley of $f:\mathbb{R}^2\rightarrow \mathbb{R}$ where $f$ is defined as the distance between $x$ and $X$, yet it is not a setwise local minimum since it is not closed. (The sublevel set of $f$ corresponding to the value zero is composed of two path-connected components, namely $X$ and $\{0\}\times[-1,1]$, whose union is $\overline{X}$.) Under some mild conditions, their relation can be summarized in \cref{prop:valley}. 

\begin{proposition}\label{prop:valley}
For a continuous function from $\mathbb{R}^n$ to $\mathbb{R}$,
\begin{enumerate}[label=(\alph*),ref=\theproposition~(\alph*)]
    \item a path-connected component of a strict setwise local minimum (respectively, setwise local minimum) is a valley (respectively, subset of a valley); \label{prop:valley:min-valley}
    \item a connected component\footnote{A subset $S$ of $\mathbb{R}^n$ is disconnected if there exist nonempty disjoint open (in $S$) sets $A$ and $B$ such that $S = A\cup B$. It is connected if it is not disconnected. A maximal connected set is called a connected component.} of a sublevel set which has finitely many connected components is a strict setwise local minimum. \label{prop:valley:valley-min}
\end{enumerate}
\end{proposition}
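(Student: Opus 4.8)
The plan is to prove the two parts in the order \ref{prop:valley:valley-min} then \ref{prop:valley:min-valley}, since the first is essentially a separation argument while the second requires \Cref{lem:bdd-const} together with a path-tracing argument.

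\emph{Part \ref{prop:valley:valley-min}.} Let $L:=\{x\in\mathbb{R}^n:f(x)\leqslant\alpha\}$ be the sublevel set in question. It is closed because $f$ is continuous, and by hypothesis $L=C_1\sqcup\cdots\sqcup C_k$ where the $C_i$ are its finitely many connected components, each closed in $\mathbb{R}^n$. Fix $i$ and set $U:=\mathbb{R}^n\setminus\bigcup_{j\neq i}C_j$. Since the union is finite, $\bigcup_{j\neq i}C_j$ is closed, so $U$ is open, and $U\supseteq C_i$ because the components are pairwise disjoint. If $y\in U\setminus C_i$ then $y\notin C_j$ for every $j$, so $y\notin L$, i.e. $f(y)>\alpha$; whereas $f(x)\leqslant\alpha$ for every $x\in C_i\subseteq L$. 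Hence $f(x)<f(y)$ for all $x\in C_i$, $y\in U\setminus C_i$, which is exactly the defining property of a strict setwise local minimum. (Finiteness is used only to make $U$ open; with infinitely many components $\bigcup_{j\neq i}C_j$ need not be closed.)

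\emph{Part \ref{prop:valley:min-valley}.} Let $S$ be a setwise local minimum with associated open set $U\supseteq S$, let $P$ be a path-connected component of $S$, and put $\alpha:=\sup_S f$ (assume for now $\alpha<\infty$; the degenerate case is discussed below). Since $f\leqslant\alpha$ on $S$, we have $P\subseteq S\subseteq L:=\{f\leqslant\alpha\}$, so $P$ lies in a single path-connected component $V$ of $L$, and $V$ is a valley; this already proves the non-strict assertion. Now suppose $S$ is a strict setwise local minimum; I claim $V=P$. Given $z\in V$, pick continuous $\gamma:[0,1]\to V\subseteq L$ with $\gamma(0)\in P$ and $\gamma(1)=z$. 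It suffices to show $\gamma([0,1])\subseteq S$, for then $\gamma([0,1])$ is a path-connected subset of $S$ meeting $P$, hence contained in $P$ by maximality, so $z\in P$; combined with $P\subseteq V$ this gives $V=P$. Suppose instead $\gamma(t)\notin S$ for some $t$. As $S$ is closed, $\{t:\gamma(t)\notin S\}$ is open in $[0,1]$ and avoids $0$, so its infimum $t_1$ satisfies $\gamma(t_1)\in S$ while there are $t_k>t_1$ with $t_k\to t_1$ and $\gamma(t_k)\notin S$; thus $\gamma(t_1)\in\partial S$, and since $\gamma(t_1)\in S\subseteq U$ with $U$ open, $\gamma(t_k)\in U\setminus S$ for all large $k$. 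By \Cref{lem:bdd-const}, $f(\gamma(t_1))=\sup_S f=\alpha$. By strict setwise minimality, $f(\gamma(t_k))>f(x)$ for all $x\in S$, so $f(\gamma(t_k))\geqslant\alpha$; but $\gamma(t_k)\in L$ gives $f(\gamma(t_k))\leqslant\alpha$, so $f(\gamma(t_k))=\alpha$. Taking $x=\gamma(t_1)\in S$ in the strict inequality yields $\alpha=f(\gamma(t_k))>f(\gamma(t_1))=\alpha$, a contradiction. Hence $\gamma([0,1])\subseteq S$, as desired.

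The substantive step, and the one I expect to be the main obstacle, is the path-tracing in \ref{prop:valley:min-valley}: one must locate the first exit point $\gamma(t_1)$ of the path from the closed set $S$, confirm it lies on $\partial S$, and then play the sublevel constraint $f\leqslant\alpha$ along $\gamma$ against the strict setwise-minimum inequality; strictness is indispensable precisely here, since in the non-strict case $f(\gamma(t_k))=\alpha$ yields no contradiction, which is why one only obtains ``subset of a valley''. The remaining loose end is the degenerate case $\alpha=\sup_S f=+\infty$, which forces $\partial S=\emptyset$ and hence $S=\mathbb{R}^n=P$; the claim then holds provided $\mathbb{R}^n$ is counted among the sublevel sets (e.g. when $f$ is bounded above) and is otherwise vacuous. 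Part \ref{prop:valley:valley-min} is routine by comparison.
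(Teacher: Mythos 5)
Your proof is correct, and it departs from the paper's argument in both parts. For \Cref{prop:valley:valley-min}, the paper first establishes that each component $C_i$ is closed via a closure-of-connected-sets argument and then invokes normality of $\mathbb{R}^n$ to separate $C_i$ from $\bigcup_{j\neq i}C_j$ by disjoint open sets, taking one of those as $U$; you instead take $U:=\mathbb{R}^n\setminus\bigcup_{j\neq i}C_j$ directly, which is cleaner and makes normality unnecessary (you do take for granted that components of a closed set are closed — true, and the paper spells it out). For the strict case of \Cref{prop:valley:min-valley}, the paper proves $V\subseteq S$ by first showing $V\cap(U\setminus S)=\emptyset$ and then using connectedness of $V$ to kill $V\cap U^c$ in the decomposition $V=(V\cap S)\cup(V\cap U^c)$; you instead trace an arbitrary path from $P$ to a point of $V$, locate its first exit time from the closed set $S$, show the exit point lies on $\partial S$, and derive a contradiction from \Cref{lem:bdd-const} and strictness. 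Both arguments are valid; the paper's is a touch shorter, yours is more pointwise and self-contained. You also rightly flag the degenerate case $\sup_S f=+\infty$: by \Cref{lem:bdd-const} it forces $\partial S=\emptyset$, hence $S=\mathbb{R}^n$, and the paper handles this case by simply asserting that $\mathbb{R}^n$ is a valley — which actually fails when $f$ is unbounded above (e.g.\ $f(x)=\|x\|^2$, where no sublevel set equals $\mathbb{R}^n$ even though $\mathbb{R}^n$ is a strict setwise local minimum). So the proposition as stated needs the caveat you identify, and you are more careful here than the source.
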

\begin{proof}
See \Cref{app:prop:valley}. 
\end{proof}

\begin{remark}\label{rem:oscill}
The assumption on finiteness of connected components is necessary, or else counterexample may occur when the function is oscillatory. For example, 
\begin{equation*}
    f(x) := \begin{cases}
        0 & \mathrm{if}~x\leqslant 0, \\
        x^2\sin\frac{1}{x} & \mathrm{if}~x>0.
    \end{cases}
\end{equation*}
The function $f$ is continuous on $\mathbb{R}$, but the sublevel set $\{x\in\mathbb{R}\,\vert\,f(x)\leqslant 0\}$ has infinitely many connected components. Take a connected component $C_1=(-\infty,0]$ (also path-connected, thus a valley), and it is not a setwise local minimum because for any open set $U$ containing $C_1$, there exists some $x_0\in N$ such that $f(x_0)<0$. 
\end{remark}

Finally, we discuss the case of coercive functions. Recall that a function $f:\mathbb{R}^n\rightarrow \mathbb{R}$ is coercive if $f(x)\to\infty$ as $\|x\|\to\infty$. 

\begin{proposition}\label{prop:char-coercive}
If a continuous function from $\mathbb{R}^n$ to $\mathbb{R}$ is coercive, then it has no spurious local minima at infinity.
\end{proposition}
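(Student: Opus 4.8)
The plan is to show that a coercive continuous function $f:\mathbb{R}^n\to\mathbb{R}$ admits essentially no room for unbounded setwise local minima: in fact the only unbounded setwise local minimum is $\mathbb{R}^n$ itself, which is trivially a setwise global minimum and hence not spurious. So let $S$ be an unbounded setwise local minimum of $f$, witnessed by an open set $U\supset S$ with $f(x)\leqslant f(y)$ for all $x\in S$, $y\in U\setminus S$.

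The first step is to observe that coercivity forces $\sup_S f=+\infty$. Since $S$ is unbounded, there is a sequence $x_k\in S$ with $\|x_k\|\to\infty$, and coercivity gives $f(x_k)\to+\infty$, whence $\sup_S f=+\infty$. The second step applies \Cref{lem:bdd-const}: because $S$ is a setwise local minimum, $f(x)=\sup_S f$ for every $x\in\partial S$. As $f$ is real-valued and $\sup_S f=+\infty$, this is impossible unless $\partial S=\emptyset$. The third step is purely topological: a setwise local minimum is closed by definition, so $S=\bar S$, and $\partial S=\emptyset$ then means $S=S^\circ$, i.e.\ $S$ is also open; since $\mathbb{R}^n$ is connected and $S$ is nonempty, $S=\mathbb{R}^n$, for which $\inf_S f=\inf_{\mathbb{R}^n}f$, so $S$ is a setwise global minimum.

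I expect the only delicate point to be the legitimacy of invoking \Cref{lem:bdd-const} with an infinite supremum — but that is exactly what makes $\partial S$ empty, and the argument is rigorous because $f$ takes values in $\mathbb{R}$. If one prefers to avoid the lemma, the same conclusion follows by a direct argument: if $S\neq\mathbb{R}^n$ then, $S$ being a proper nonempty closed subset of the connected space $\mathbb{R}^n$, it cannot be open, so $\partial S=S\setminus S^\circ\neq\emptyset$; picking $z\in\partial S\subset S\subset U$ and using that $U$ is open, a neighbourhood of $z$ lies in $U$ and, since $z$ is not interior to $S$, contains a point $y\in U\setminus S$; but then $f(y)\geqslant f(x_k)\to+\infty$, a contradiction. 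Either route closes the proof, so $f$ has no spurious local minima at infinity.
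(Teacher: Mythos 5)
Your proof is correct, and it takes a genuinely different route from the paper's. The paper's argument is shorter: it notes that $S\neq\mathbb{R}^n$ (since $\inf_S f>\inf_{\mathbb{R}^n}f$), so $U\setminus S\neq\emptyset$, picks any $y\in U\setminus S$, and observes $S\subset\{x\,\vert\,f(x)\leqslant f(y)\}$, which is a bounded set by coercivity — done. Your proof instead leverages \Cref{lem:bdd-const}: you show coercivity and unboundedness of $S$ force $\sup_S f=+\infty$, which by the lemma (or your direct boundary argument) rules out $\partial S\neq\emptyset$ for a real-valued $f$, so $S$ is clopen, hence $S=\mathbb{R}^n$ by connectedness. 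Both routes rely on connectedness of $\mathbb{R}^n$ at some point (the paper's proof does so implicitly when asserting $U\setminus S\neq\emptyset$). What your approach buys is a slightly sharper structural statement — that for a coercive function, the \emph{only} unbounded setwise local minimum is $\mathbb{R}^n$ itself — whereas the paper's is more economical, bypassing the boundary lemma entirely by going straight through bounded sublevel sets. Both are rigorous.
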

\begin{proof}
See \Cref{app:prop:char-coercive}. 
\end{proof}

In many statistical learning problems, the loss functions without regularizer are usually not coercive, so spurious local minima at infinity may exist. Therefore, it is important to develop some device to check whether spurious local minima exist or not so that optimization algorithms can be designed to avoid getting trapped in them.

\subsection{Clarke subdifferential}\label{subsec:clarke}

In this subsection, we will review some concepts and results on generalized derivative in the sense of Clarke \cite[p. 27]{clarke1990}, since \Cref{thm:main-intro} also considers nonsmooth functions.

\begin{definition}\label{def:locally-lipschitz}
A function $f:\mathbb{R}^n\rightarrow\mathbb{R}^m$ is \textit{locally Lipschitz} if for all $a \in \mathbb{R}^n$, there exist positive constants $r$ and $L$ such that 
	\begin{equation*}
		\forall\, x,y \in B(a,r), ~~~ \|f(x)-f(y)\| \leqslant L \|x-y\|.
	\end{equation*}
\end{definition}

Notice that for a locally Lipschitz function, by \cite[Theorem 3.2]{evans2018measure}, the derivative exists almost everywhere. Without any assumption on convexity, in order to ensure the existence of a subdifferential, we adopt the notion of Clarke subdifferential. 

\begin{definition}
\label{def:Clarke}
\cite[p. 27]{clarke1990} Let $f:\mathbb{R}^n \rightarrow \mathbb{R}$ be a locally Lipschitz function. The \textit{Clarke subdifferential} is the set-valued mapping $\partial f$ from $\mathbb{R}^n$ to the subsets of $\mathbb{R}^n$ defined for all $x \in \mathbb{R}^n$ by
    \begin{equation*}
    \partial f(x) := \{ s \in \mathbb{R}^n ~\vert~ f^\circ(x,d) \geqslant \langle s , d \rangle, ~ \forall\, d\in \mathbb{R}^n \} 
\end{equation*}
where
\begin{equation*}
    f^\circ(x,d) := \limsup_{\tiny\begin{array}{c} y\rightarrow x \\
    t \searrow 0
    \end{array}
    } \frac{f(y+td)-f(y)}{t}.
\end{equation*}
\end{definition}

It is well known that for any locally Lipschitz function $f$ and any $x\in\mathbb{R}^n$, the Clarke subdifferential $\partial f(x)$ is a nonempty, convex, and compact set \cite[Proposition 2.1.2(a)]{clarke1990}. Similar to differentiable functions, a point $x\in\mathbb{R}^n$ is called (Clarke) \textit{critical point} if $0\in\partial f(x)$. A real number $y$ is called a (Clarke) \textit{critical value} of $f$ if there exists a (Clarke) critical point $x\in\mathbb{R}^n$ of $f$ such that $f(x)=y$.

\subsection{Subgradient trajectory}

In this subsection, we will introduce some basic concepts and fundamental properties related to subgradient trajectories. 
\begin{definition}
    \label{def:absolute_continuity} 
    \cite[Definition 1 p. 12]{aubin1984differential}
    Given two real numbers $a<b$, a function $x:[a,b]\rightarrow\mathbb{R}^n$ is \textit{absolutely continuous} if for all $\epsilon>0$, there exists $\delta>0$ such that, for any finite collection of disjoint subintervals $[a_1,b_1],\hdots,[a_m,b_m]$ of $[a,b]$ such that $\sum_{i=1}^m b_i-a_i \leqslant \delta$, we have $\sum_{i=1}^m \|x(b_i) - x(a_i)\| \leqslant \epsilon$.
\end{definition}

By virtue of \cite[Theorem 20.8]{nielsen1997introduction}, $x:[a,b]\rightarrow\mathbb{R}^n$ is absolutely continuous if and only if it is differentiable almost everywhere on $(a,b)$, its derivative $x'$ is Lebesgue integrable, and $x(t) - x(a) = \int_a^t x'(\tau)d\tau$ for all $t\in [a,b]$. Given a non-compact interval $I$ of $\mathbb{R}$, $x:I\rightarrow \mathbb{R}^n$ is absolutely continuous if it is absolutely continuous on all compact subintervals of $I$.

An absolutely continuous function $x:[0,\infty)\rightarrow\mathbb{R}^n$ is called a \textit{subgradient trajectory} of $f:\mathbb{R}^n\rightarrow\mathbb{R}$ starting at $x_0\in\mathbb{R}^n$ if it satisfies the following differential inclusion with initial condition: 
\begin{equation}\label{eq:ivp-subgrad}
    x'(t) \in -\partial f(x(t)),\quad \text{for almost every $t\geqslant 0$},\quad x(0)=x_0,
\end{equation}
where ``almost every'' means all elements except for those in a set of zero measure. 

However, a subgradient trajectory may not always exist for arbitrary $f$, even if $f$ is a smooth function. Let $f(x)=-\frac{1}{3}x^3$ and $x_0=1$, then it is easy to see $x(t)=\frac{1}{1-t}$ is the unique solution for $t\in[0,1)$ and it cannot be extended to an absolutely continuous function on $[0,\infty)$ due to the singularity at $t=1$. 
In this case, one would seek a family of functions including many loss functions arising in applications that guarantee the existence of a subgradient trajectory. We say a function $f:\mathbb{R}^n\rightarrow\mathbb{R}$ is \textit{bounded below} if $\inf_{\mathbb{R}^n}f=c>-\infty$. It was shown in \cite[Theorem 3.2]{marcellin2006evolution} that a primal lower nice function bounded below by a linear function suffices. However, in general it is not easy to check whether those nonconvex functions in statistical learning problems are primal lower nice. For easily checkable conditions, the following result generalized from \cite[Proposition 2.3]{santambrogio2017euclidean} for differentiable functions tells us that a locally Lipschitz function bounded below also suffices. 

\begin{proposition}\label{prop:exist}
If $f:\mathbb{R}^n\rightarrow\mathbb{R}$ is locally Lipschitz and bounded below, then there exists a subgradient trajectory of $f$ starting at arbitrary $x_0\in\mathbb{R}^n$. 
\end{proposition}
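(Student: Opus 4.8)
The plan is to construct a subgradient trajectory on $[0,\infty)$ as the limit of solutions to a sequence of regularized (discretized) problems, following the Euclidean minimizing-movement / implicit Euler scheme as in \cite{santambrogio2017euclidean}, but adapting the argument from the differentiable case to the locally Lipschitz case by replacing gradients with Clarke subgradients of minimal norm. Concretely, fix a step size $\tau>0$ and define iterates $x^\tau_0 := x_0$ and, for $k\geqslant 0$,
\[
x^\tau_{k+1} \in \operatorname*{arg\,min}_{y\in\mathbb{R}^n}\ f(y) + \frac{1}{2\tau}\|y-x^\tau_k\|^2;
\]
existence of a minimizer follows since $f$ is continuous (being locally Lipschitz) and bounded below, so the objective is coercive. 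The first-order optimality condition at $x^\tau_{k+1}$ gives $0\in\partial\bigl(f(\cdot)+\tfrac{1}{2\tau}\|\cdot - x^\tau_k\|^2\bigr)(x^\tau_{k+1})$, hence $(x^\tau_k - x^\tau_{k+1})/\tau \in \partial f(x^\tau_{k+1})$, the discrete analogue of $x' \in -\partial f(x)$. Interpolating the iterates piecewise-linearly in time yields curves $x^\tau:[0,\infty)\to\mathbb{R}^n$.

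Next I would establish the a~priori estimates needed to pass to the limit $\tau\to 0$. Summing the descent inequality $f(x^\tau_{k+1}) + \tfrac{1}{2\tau}\|x^\tau_{k+1}-x^\tau_k\|^2 \leqslant f(x^\tau_k)$ over $k$ and using $\inf f > -\infty$ gives the uniform bound $\sum_k \|x^\tau_{k+1}-x^\tau_k\|^2/\tau \leqslant f(x_0) - \inf f$, which is exactly control of $\int_0^\infty \|(x^\tau)'(t)\|^2\,dt$; this yields equi-absolute-continuity (an $H^1$ bound) on every compact time interval, and in particular the curves stay in a bounded region on bounded time intervals. By the Arzelà–Ascoli theorem plus weak compactness of the derivatives in $L^2_{\mathrm{loc}}$, a subsequence converges uniformly on compacts to some absolutely continuous $x:[0,\infty)\to\mathbb{R}^n$ with $x(0)=x_0$. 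The remaining step is to verify the differential inclusion $x'(t)\in -\partial f(x(t))$ a.e.: since the Clarke subdifferential is upper semicontinuous with nonempty convex compact values \cite[Proposition 2.1.2]{clarke1990}, one can appeal to a closure theorem for differential inclusions (e.g.\ \cite[Theorem 1, Section 7, Chapter 2]{aubin1984differential}) — the discrete inclusions $(x^\tau)'\approx$ an element of $-\partial f$ evaluated near $x^\tau$, together with uniform convergence of $x^\tau$ and weak $L^2$ convergence of $(x^\tau)'$, force the limit to satisfy the inclusion.

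The main obstacle I anticipate is the last step: transferring the discrete inclusion to the limit requires that the subgradients $(x^\tau_k - x^\tau_{k+1})/\tau$ are evaluated at points converging to $x(t)$ while the velocities converge only weakly, so one genuinely needs the graph-closedness (outer semicontinuity) of $\partial f$ combined with Mazur's lemma to upgrade weak convergence to strong convergence of suitable convex combinations landing in $-\partial f(x(t))$. A secondary technical point is confirming that the iterates do not escape to infinity in finite time: this is ruled out precisely by the summable-squared-increments estimate, which bounds $\|x^\tau_k - x_0\|$ on any finite time horizon uniformly in $\tau$ — this is where boundedness below is used in an essential way (and, as the cubic example $f(x)=-\tfrac13 x^3$ shows, cannot be dropped). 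With these pieces in place, the constructed $x$ is an absolutely continuous solution of \eqref{eq:ivp-subgrad} on all of $[0,\infty)$, proving the proposition.
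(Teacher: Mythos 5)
Your proposal follows essentially the same route as the paper's proof: the minimizing-movement (implicit Euler) scheme, the telescoping descent estimate giving a uniform $L^2$ bound on the interpolated velocities, Arzel\`a--Ascoli for uniform convergence of the interpolants on compacts, weak compactness of the derivatives, and the closure/convergence theorem for differential inclusions from Aubin--Cellina to transfer the discrete inclusion to the limit. The only cosmetic differences are that the paper works on a fixed $[0,T]$ and extends to $[0,\infty)$ by concatenation rather than by a diagonal argument, uses weak $L^1$ rather than weak $L^2$ compactness, and tracks both the piecewise-linear and piecewise-constant interpolants explicitly (your phrase ``Clarke subgradients of minimal norm'' is not actually needed or used in the scheme --- the proximal step just returns some element of $\partial f$, not the minimal-norm one).
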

\begin{proof}
See \Cref{app:prop:exist}. 
\end{proof}

We remark here that with \Cref{prop:exist}, one can recover Ekeland's variational principle \cite[Corollary 2.3]{ekeland1974variational} \cite[Corollary]{hiriart1983short} for locally Lipschitz lower bounded functions with a chain rule (see \cite[Theorem 3.1]{ha2003ekeland} for an extension to lower semi-continuous lower bounded functions). Indeed, \Cref{prop:exist} implies that for all $\epsilon>0$, there exists $(x,s)  \in \mathrm{graph} ~ \partial f$ such that $f(x) \leqslant \inf f + \epsilon$ and $\|s\| \leqslant \epsilon$.\footnote{This follows from the formula $f(x(t)) - \inf f \geqslant \int_t^\infty d(0,\partial f(x(\tau)))^2d\tau$ where $d(x,X) := \inf_{y\in X} \|x-y\|$ (see \cite[Lemma 5.2]{davis2020stochastic} and \cite[Proposition 4.10]{drusvyatskiy2015curves}).} Note that \Cref{prop:exist} only guarantees the existence of a solution to \Cref{eq:ivp-subgrad} for all $t\geqslant 0$, but the solution $x(t)$ could go to infinity as $t\to\infty$. This motivates the following definition.

\begin{definition}\label{def:bdd-subgrad}
A locally Lipschitz lower bounded function $f:\mathbb{R}^n\rightarrow\mathbb{R}$ has \textit{bounded subgradient trajectories} if for any $x_0\in\mathbb{R}^n$, there exists a subgradient trajectory $x$ of $f$ starting at $x_0$ and a constant $r>0$, such that $\|x(t)\|\leqslant r$ for all $t\geqslant 0$. 
\end{definition}

Finally, notice that when $f$ is continuously differentiable, by \cite[Proposition 2.2.4]{clarke1990}, \Cref{eq:ivp-subgrad} reduces to the classical Cauchy problem of differential equation
\begin{equation*}
    x'(t) = -\nabla f(x(t)), \quad \text{for all $t \geqslant 0$,} \quad x(0)=x_0. 
\end{equation*}
and subgradient trajectory reduces to gradient trajectory by imposing $x$ to be continuously differentiable. Recall the descent property of gradient trajectories \cite[Proposition 17.1.1]{attouch2014variational}, i.e., $f \circ x$ is a decreasing function for any gradient trajectory $x$ of $f$. We want this nice property to hold even in the general case when $f$ is only locally Lipschitz. We adopt the notion of chain rule in \cite[Definition 5.1]{davis2020stochastic}. Note that functions admitting a chain rule are also referred to as path differentiable \cite[Definition 3]{bolte2020conservative}.

\begin{definition}\label{def:chain-rule}
Let $f:\mathbb{R}^n\rightarrow\mathbb{R}$ be locally Lipschitz. We say $f$ \textit{admits a chain rule} if for any absolutely continuous function $x:[0,\infty)\rightarrow \mathbb{R}^n$, we have
\begin{equation*}
	(f\circ x)'(t) = \langle v, x'(t)\rangle, \quad \forall\, v\in\partial f(x(t)),
\end{equation*}
for almost every $t\in [0,\infty)$. 
\end{definition}

Thus, for any locally Lipschitz function that admits a chain rule, by \cite[Lemma 5.2]{davis2020stochastic}, the function value is always decreasing in time along the subgradient trajectory. A detailed discussion on what class of functions admits a chain rule can be found in \cite{bolte2020conservative}. Note that general Lipschitz functions are far from admitting a chain rule since they generically have a maximal Clarke subdifferential \cite{daniilidis2020pathological,borwein2000lipschitz,daniilidis2019linear}.

\section{Proof of \texorpdfstring{\Cref{thm:main-intro}}{}}\label{sec:proofmain}

This section contains the proof of the main result, i.e., \Cref{thm:main-intro}. After the proof, we will explain the necessity of the assumptions in \Cref{thm:main-intro} by raising some counterexamples. For emphasis, we summarize all assumptions in \Cref{thm:main-intro} below. 

\begin{assumption}\label{assm:funcClass}
Let $f:\mathbb{R}^n\rightarrow\mathbb{R}$ be a function such that it  
\begin{enumerate}[label={(\alph*)}]
    \item is bounded below, namely, $\inf_{\mathbb{R}^n}f > -\infty$;
    \item is locally Lipschitz continuous on $\mathbb{R}^n$; see \Cref{def:locally-lipschitz}; 
    \item admits a chain rule; see \Cref{def:chain-rule}; 
    \item has finitely many critical values; see \Cref{subsec:clarke}; 
    \item has bounded subgradient trajectories; see \Cref{def:bdd-subgrad}. 
\end{enumerate}
\end{assumption}

\begin{proof}[Proof of \Cref{thm:main-intro}]
Let $f:\mathbb{R}^n\rightarrow \mathbb{R}$ be a function satisfying \Cref{assm:funcClass} If $f$ has no spurious setwise local minima, then $f$ has no spurious local minima. We next prove the converse. Let $S\subset\mathbb{R}^n$ be a setwise local minimum of $f$. We seek to show that $S$ is a setwise global minimum of $f$. If $S^\circ = \emptyset$, then by \Cref{lem:bdd-const} $f(x) = \sup_S f$ for all $x \in \partial S = \bar{S} \setminus S^\circ = S$ since $S$ is closed by \Cref{def:setwise-local-min}. Thus $f$ is constant on $S$. By definition of setwise local minima (\Cref{def:setwise-local-min}), there exists an open subset $U$ of $\mathbb{R}^n$ containing $S$ such that the constant value of $f$ on $S$ is less than or equal to $f(y)$ for all $y \in U \setminus S$. Thus every point in $S$ is a local minimum of $f$. Since every local minimum of $f$ is a global minimum, $\inf_S f = \inf_{\mathbb{R}^n} f$ and $S$ is a setwise global minimum according to \Cref{def:setwise-global-min}. The rest of the proof deals with the case when $S^\circ \neq \emptyset$. Let $C$ be the set of all critical points of $f$ in $S$ and consider the following optimization problem:
\begin{equation}\label{eq:optimization}
    \inf_{x \in C} f(x).
\end{equation}
We claim that the set of (global) solutions to \Cref{eq:optimization} is nonempty, and that any solution is a local minimum of $f$ belonging to the setwise local minimum $S$. We first show that the feasible set of \Cref{eq:optimization} is nonempty.

Since $S^\circ \neq \emptyset$, let $x_0 \in S^\circ$. If $x_0 \in C$, then the feasible set $C$ is nonempty. We thus assume that $x_0 \notin C$. Since $f$ is locally Lipschitz and bounded below, by \Cref{prop:exist} there exists a subgradient trajectory $x:[0,\infty)\rightarrow\mathbb{R}^n$ starting at $x_0$. We next show that $x([0,\infty)) \subset S$. We reason by contradiction and assume that $S^c \cap x([0,\infty)) \neq \emptyset$, where $S^c$ is the complement of $S$ in $\mathbb{R}^n$. Then $S^\circ$ and $S^c$ are disjoint open subsets of $\mathbb{R}^n$ such that $S^\circ \cap x([0,\infty)) \neq \emptyset$ (the intersection contains $x_0$), $S^c \cap x([0,\infty)) \neq \emptyset$, and $x([0,\infty)) = (x([0,\infty)) \cap S^\circ) \cup (x([0,\infty)) \cap S^c) \subset \mathbb{R}^n \setminus \partial S$ (since\footnote{If there exists $t>0$ such that $f(x(t)) = f(x(0))$, then $f(x(t)) - f(x(0)) = \int_0^{t} \|x'(s)\|^2ds = 0$ and $x'(s) = 0$ for almost every $s\in (0,t)$. Since $x'(s) \in -\partial f(x(s))$ for almost every $s>0$, by \cite[2.1.5 Proposition (b) p. 29]{clarke1990} we have $0 \in \partial f(x(0))$.} $f(x(t)) < f(x(0)) = f(x_0) \leqslant f(x)$ for all $t > 0$ and $x \in \partial S$, where the last inequality follows from \Cref{lemma:boundary}). Thus the connected set $x([0,\infty))$ is the union of two relatively open disjoint nonempty sets, which is a contradiction. 

Since $f$ has bounded subgradient trajectories and $x(\cdot)$ is an arbitrary subgradient trajectory starting at $x_0$, by \Cref{def:bdd-subgrad} and without loss of generality there exists $r>0$ such that $\|x(t)\| \leqslant r$ for all $t\geqslant 0$. We next show that there exists a critical point of $f$ in $B(0,r) \cap S$. Suppose that there exist two constants $T,\epsilon>0$ for which $\lVert x'(t)\rVert\geqslant\epsilon$ for all $t\geqslant T$ such that $x'(t)\in-\partial f(x(t))$. By \cite[Lemma 5.2]{davis2020stochastic}, we have $(f \circ x)'(t) = -\lVert x'(t)\rVert^2 \leqslant -\epsilon^2$ for almost every $t\geqslant T$. By integrating, we get $f(x(t)) - f(x(T)) \leqslant - \epsilon^2t$ and thus $f(x(t))$ converges to $-\infty$ as $t\to\infty$. This is impossible since $x(t) \in B(0,r)$ and $f$ is continuous. Hence there exists a time sequence $t_k\to\infty$ such that $\lVert x'(t_k)\rVert\to 0$ as $k\to\infty$ and $x'(t_k)\in-\partial f(x(t_k))$ for all $k\in\mathbb{N} := \{0,1,2,\hdots\}$. By the Bolzano–Weierstrass theorem, there exists a subsequence $x(t_{k_j})$ of $x(t_k)$ such that $x(t_{k_j})\to\tilde{x}\in\mathbb{R}^n$ as $j\to\infty$. Since $x'(t_{k_j})\in -\partial f(x(t_{k_j}))$, by \cite[2.1.5 Proposition (b) p. 29]{clarke1990} we have $0\in -\partial f(\tilde{x})$. Finally, since $x([0,\infty)) \subset S$ and $S$ is closed, we have $\tilde{x}\in C$. We obtain that $C\neq \emptyset$ as desired.

Since $f$ has finitely many critical values and $C \neq \emptyset$, the set of solutions to \Cref{eq:optimization} is nonempty. Let $x^* \in C$ be a solution, that is to say $f(x^*) = \min_C f$. Recall that $C$ is a subset of the setwise local minimum $S$. If $x^*$ is a local minimum of $f$, then it is a global minimum of $f$ in $S$ since every local minimum of $f$ is a global minimum. Thus $\inf_S f = \inf_{\mathbb{R}^n} f$ and $S$ is a setwise global minimum. For the remainder of the proof, we consider the case where $x^*$ is not a local minimum and show that this leads to a contradiction. We first show that there exists $s_0\in S^\circ$ such that $f(s_0)<f(x^*)$. This is clearly true if $x^*\in S^\circ$ since one can then find a ball centered at $x^*$ inside $S^\circ$. If $x^*\in S \setminus S^\circ = \partial S$, then we reason by contradiction and assume that $f(x) \geqslant f(x^*)$ for all $x\in S^\circ$. By \Cref{lemma:boundary}, we have $f(x) = f(x^*) = \sup_S f \geqslant f(y) \geqslant f(x^*)$ for all $(x,y)\in (S \setminus S^\circ) \times S^\circ$. Hence $f(x^*) = f(x)$ for all $x\in S$. Since $S$ is a setwise local minimum, there exists an open set $U$ such that $f(x) \geqslant f(x^*)$ holds for all $x\in U\setminus S$. Thus $f(x)\geqslant f(x^*)$ for all $x\in U$ and $x^*$ is a local minimum. This yields a contradiction. Hence let $s_0\in S^\circ$ be such that $f(s_0)<f(x^*)$. The nonempty closed set $S' := S \cap [f \leqslant (f(s_0)+f(x^*))/2]$ is a setwise local minimum of $f$ where $[f\leqslant \alpha] := \{ x \in \mathbb{R}^n ~\vert~ f(x) \leqslant \alpha \}$. Indeed, for all $x \in S'$ and $y \in U \setminus S' = (U \setminus S) \cup (U \setminus [f \leqslant (f(s_0)+f(x^*))/2])$, we have\footnote{Indeed, for any sets $A,B$, and $C$ it holds that $A\setminus (B\cap C) = (A \setminus B) \cup (A\setminus C)$.} $f(x) \leqslant (f(s_0)+f(x^*))/2 \leqslant f(y)$. Since $s_0 \in S^\circ$ and $f(s_0)<(f(s_0)+f(x^*))/2$, we have $s_0 \in S^\circ \cap [f\leqslant (f(s_0)+f(x^*))/2]^\circ = (S \cap [f\leqslant (f(s_0)+f(x^*))/2]))^\circ = (S')^\circ$. Hence the setwise local minimum $S'$ has nonempty interior. Also, $S' \subset S$ and $\sup_{S'} f < f(x^*) = \min_C f$ where we remind the reader that $C$ is the set of critical points in $S$. Thus $S'$ is devoid of critical points. However, by the previous
paragraph, setwise local minima of $f$ with nonempty interior must contain a critical point. This yields a contradiction.
\end{proof}

\begin{remark}[Finitely many critical values]
This assumption is not intuitive and we explain why it is necessary by the following example. Define $f:\mathbb{R}\rightarrow\mathbb{R}$ as   
\begin{equation*}
    f(x) := \begin{cases}
        (x+4)^2-8 & \text{if } x\leqslant -2; \\
        -x^2 & \text{if } x\in[-2,0]; \\
        -2^{-k}(x-2k)^{2^{k+1}}-3(1-2^{-k}) & \text{if } x\in [2k,2k+1],\; k\in\mathbb{N}; \\
        2^{-k}(x-2k)^{2^{k+1}}-3(1-2^{-k}) & \text{if } x\in [2k-1,2k],\; k\in\mathbb{N}^+. 
    \end{cases}
\end{equation*}
where $\mathbb{N}^+$ is the set of all positive integers. To be more intuitive, we give the plot of $f$ on $[-7,7]$ in \Cref{fig:infinite-cvalues}. 
\begin{figure}[ht]
    \centering
    \includegraphics[width=.7\textwidth]{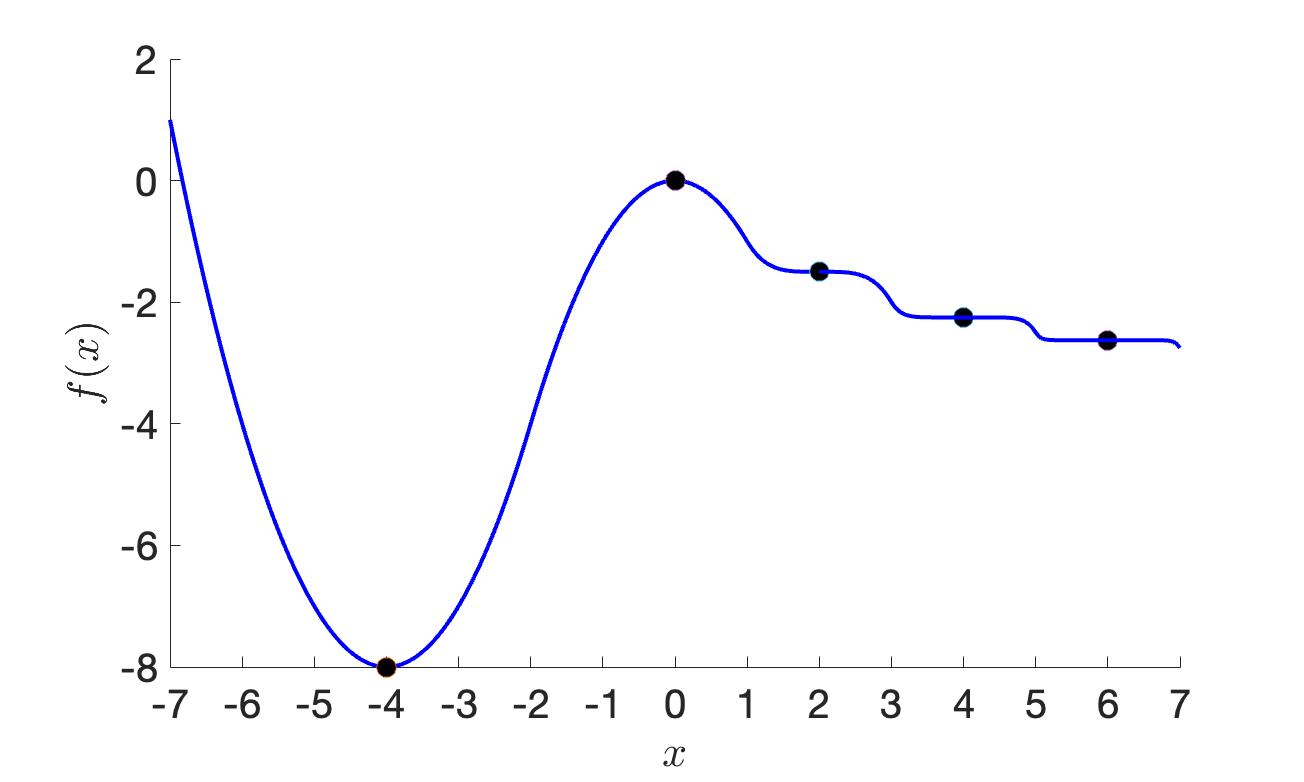}
    \caption{An example of function with infinitely many critical values}
    \label{fig:infinite-cvalues}
\end{figure}

By standard calculus, one can see $f$ is continuously differentiable, $f(x)\to -3$ as $x\to\infty$, and $f(x)\geqslant -8$ over $\mathbb{R}$. Furthermore, $\{-4\}\cup\{2k\}_{k\in\mathbb{N}}$ are all critical points of $f$, with critical values $\{-8\}\cup \{-3(1-2^{-k})\}_{k\in\mathbb{N}}$ respectively. Finally, the subgradient trajectory of $f$ starting at $x_0<0$ will converge to the critical point $x=-4$; the one starting at $x_0=0$ will stay at the critical point $x=0$; and the one starting at $x_0>0$ such that $2k<x_0\leqslant 2k+2$ will converge to $x=2k+2$, for all $k\in\mathbb{N}$. This shows $f$ has bounded subgradient trajectories. Thus, $f$ satisfies all conditions in \Cref{assm:funcClass} except the finiteness of critical values. It is also easy to see $f$ has no spurious local minima because all of its critical points are either global minimum ($x=-4$), or local maximum ($x=0$), or saddle points. However, for any $a>0$, the set $[a,\infty)$ is a spurious local minima at infinity. This shows that \Cref{thm:main-intro} may not hold for functions with infinitely many critical values. 
\end{remark}

\begin{remark}[Bounded subgradient trajectories]
This is the main assumption of \Cref{thm:main-intro}. Without it, one could easily think of a smooth function without any spurious local minimum, yet has spurious local minimum at infinity. This is the case of the function in \Cref{fig:setwise} in which the yellow curve corresponds to an unbounded gradient trajectory. In order to prove the necessity of the boundedness assumption, it suffices to consider the univariate function $f$ defined in \cite[Figure 4(a)]{joszneurips2018} defined by
\begin{equation*}
    f(x) := \frac{x^2(1+x^2)}{1+x^4},\quad f'(x) = -\frac{2x(x^4-2x^2-1)}{(x^4+1)^2}. 
\end{equation*}
By solving $f'(x)=0$, we know that $f$ has three critical points, among which $x=0$ is the global minimum and $x=\pm (\sqrt{2}-1)^{-1/2}$ are two global maxima. Thus, $f$ is bounded below, continuously differentiable (hence locally Lipschitz and admits a chain rule), has finitely many critical values, and has no spurious local minima. Since $f$ is strictly decreasing for all $x\geqslant (\sqrt{2}-1)^{-1/2}\approx 1.55$ and $f(x)\to 1$ as $x\to\infty$, one can easily see $[2,\infty)$ is a spurious local minimum at infinity. This shows that \Cref{thm:main-intro} does not hold and the reason is that $f$ does not have bounded subgradient trajectories. To see this explicitly, consider the Cauchy problem 
\begin{equation*}
    \dot{x} = \frac{2x(x^4-2x^2-1)}{(x^4+1)^2}, \quad x(0)=2. 
\end{equation*}
By using separation of variables, the unique solution $x(t)$ is given by 
\begin{align*}
    c+2t = \frac{1}{4}x^4+x^2 &+(2+\sqrt{2})\log(x^2-\sqrt{2}-1) \nonumber\\
    & +(2-\sqrt{2})\log(x^2+\sqrt{2}-1)-\log x =: g(x), 
\end{align*}
where $c$ is a constant determined by $x(0)=2$. It is easy to see that $x$ is strictly increasing so $x(t)\geqslant 2$ for all $t\in[0,\infty)$. Note that $g$ is continuous on $[2,\infty)$, so if $x$ is bounded, then $g\circ x$ is bounded. This contradicts the fact that $g(x(t))=2t+c\to\infty$ as $t\to\infty$, and thus $f$ has an unbounded subgradient trajectory. 
\end{remark}

\section{Applications}\label{sec:applications}

In this section, we use \Cref{thm:main-intro} to analyze the landscape of some widely used loss functions in unconstrained optimization. To be more specific, we will consider deep linear neural network, one dimensional deep sigmoid neural network, matrix sensing, and nonsmooth matrix factorization in the following four subsections respectively. 

\subsection{Deep linear neural network}

As a prototypical example in deep learning, the landscape of deep linear neural network has been widely studied; see for example \cite{kawaguchi2016deepNIPS,laurent2018deep,venturi2019spurious}. Consider minimizing the loss function of linear neural network without bias term 
\begin{equation}\label{eq:linearNN-obj}
    f(W_1,\ldots,W_L) := \frac{1}{2}\lVert W_L\cdots W_1 X-Y\rVert_F^2, 
\end{equation}
where $X\in\mathbb{R}^{d_0\times d_x}$, $Y\in\mathbb{R}^{d_L\times d_x}$, and $W_i\in\mathbb{R}^{d_i\times d_{i-1}}$ for $i=1,\ldots,L$. Here $\|\cdot\|_F$ denotes the Frobenius norm. 
It was recently established that $f$ has no spurious valleys \cite[Theorem 11]{venturi2019spurious}, however this fact alone does not imply the absence of spurious local minima at infinity (recall \Cref{fig:relu}). Together with the fact that $f$ has no spurious local minima \cite[Corollary 1]{zhang2019depth} and that $f$ is semi-algebraic, it can be deduced that $f$ has no spurious setwise local minima (and thus no spurious local minima at infinity). 

The proof of the absence of spurious valleys \cite[Theorem 11]{venturi2019spurious} is tailored to the problem at hand. Using linear algebra, it argues that from any initial point one can construct a piecewise linear path to a global minimum along which the objective function is non-increasing. The proof spans multiple pages and requires several technical lemmas. The proof that we propose is shorter and follows a general principle, namely Theorem \ref{thm:main-intro}, that applies to various problems as the next subsections will show. The first four assumptions of Theorem \ref{thm:main-intro} are easy to verify: $f$ is nonnegative, hence bounded below; $f$ is continuously differentiable, hence locally Lipschitz and admits a chain rule; $f$ is semi-algebraic, by \cite[Corollary 1.1]{pham2016genericity}, it has finitely many critical values. Thus, it suffices to show $f$ has bounded gradient trajectories.

\begin{proposition}\label{prop:linearNN-bddgrad}
Linear neural network with loss function \Cref{eq:linearNN-obj} has bounded gradient trajectories. 
\end{proposition}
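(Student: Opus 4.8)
The plan is to combine the descent property of the gradient flow with the classical conservation (``balancedness'') law of deep linear networks. Write $M := W_L\cdots W_1$, $E := MX - Y$, and for $i=1,\dots,L$ set $P_i := W_L\cdots W_{i+1}$ (with $P_L := I$) and $Q_i := W_{i-1}\cdots W_1 X$ (with $Q_1 := X$), so that $\nabla_{W_i} f = P_i^{\rm T} E\, Q_i^{\rm T}$. Differentiating along $\dot W_i = -\nabla_{W_i} f$ and using $Q_{i+1}=W_iQ_i$ and $P_{i+1}W_{i+1}=P_i$, I would check that
$$\frac{d}{dt}\bigl(W_{i+1}(t)^{\rm T}W_{i+1}(t) - W_i(t)W_i(t)^{\rm T}\bigr)=0,\qquad i=1,\dots,L-1,$$
so each Gram difference equals a constant matrix $D_i$. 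Taking traces gives $\|W_i(t)\|_F^2 = \|W_1(t)\|_F^2 + c_i$ for constants $c_i$; in particular all layers are bounded if and only if $W_1$ is bounded. I would also record two elementary facts: (i) since $\nabla_{W_1}f = P_1^{\rm T} E\, X^{\rm T}$ annihilates $\ker(X^{\rm T})$ on the right, the restriction of $W_1(t)$ to $\ker(X^{\rm T})$ does not depend on $t$; and (ii) by the descent property $f$ is nonincreasing along the flow, so $\|M(t)X\|_F \le \|M(0)X - Y\|_F + \|Y\|_F =: R$ for all $t\ge 0$.

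Now suppose, for contradiction, that the gradient trajectory from some initial point is unbounded. It exists on all of $[0,\infty)$ by \Cref{prop:exist} and is bounded on every compact interval, so there is a sequence $t_k\to\infty$ with $a_k := \|W_1(t_k)\|_F \to \infty$; then $\|W_i(t_k)\|_F/a_k \to 1$ for each $i$. The rescaled tuple $\bar W_i^{(k)} := W_i(t_k)/a_k$ is bounded, so along a subsequence $\bar W_i^{(k)} \to \bar W_i$ with $\|\bar W_i\|_F = 1$. Dividing the conservation law by $a_k^2$ and letting $k\to\infty$ gives the balancedness relations $\bar W_{i+1}^{\rm T}\bar W_{i+1} = \bar W_i\bar W_i^{\rm T}$ for $i=1,\dots,L-1$. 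Dividing $M(t_k)X = a_k^{L}\,\bar W_L^{(k)}\cdots\bar W_1^{(k)} X$ by $a_k^L$ and using (ii) gives $\bar W_L\cdots\bar W_1 X = 0$. And (i) gives $\ker(X^{\rm T}) \subseteq \ker(\bar W_1)$.

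These three conclusions are contradictory. Since $\bar W_1\ne 0$ and $\bar W_1$ annihilates $\ker(X^{\rm T})$, it cannot also annihilate all of $\ker(X^{\rm T})^\perp = \mathrm{range}(X)$, so $\bar W_1 X\ne 0$. The balancedness relations give $\ker(\bar W_{i+1}) = \ker(\bar W_{i+1}^{\rm T}\bar W_{i+1}) = \ker(\bar W_i\bar W_i^{\rm T}) = \mathrm{range}(\bar W_i)^\perp$, i.e. $\bar W_{i+1}$ annihilates precisely the orthogonal complement of the column space of $\bar W_i$. Since the columns of $\bar W_i\cdots\bar W_1 X$ lie in $\mathrm{range}(\bar W_i)$, a nonzero such matrix cannot be annihilated by $\bar W_{i+1}$; by induction $\bar W_L\cdots\bar W_1 X\ne 0$, contradicting the previous paragraph. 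Hence every gradient trajectory is bounded.

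The step I expect to be the main obstacle is the middle one: ruling out the scenario in which several layers simultaneously blow up while their product against $X$ stays bounded through cancellation across layers. The conservation law is exactly the tool that forbids such cancellation — after rescaling it forces the limiting weights to be balanced, and balancedness rigidly couples the kernels and column spaces of consecutive layers, which is what drives the final induction. A secondary technical point is the bookkeeping in the rescaling argument, in particular verifying $\|W_i(t_k)\|_F^2 = \|W_1(t_k)\|_F^2 + c_i$ so that one common normalizing scalar $a_k$ works for all layers at once and the balancedness relations survive the limit.
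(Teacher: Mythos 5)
Your proof is correct, and it takes a genuinely different route from the paper's. The paper's proof is a reduction: it performs an SVD change of variables $X=U\Sigma V^{\rm T}$, absorbs $U$ into $W_1$, observes that the columns of $W_1$ facing the null directions of $X$ are stationary, and thereby collapses the dynamics onto a smaller linear-network problem $g$ with data matrix $\Lambda$ satisfying $\Lambda\Lambda^{\rm T}\succ 0$; it then invokes \cite[Theorem 3.2]{bah2022learning} (which treats the full-rank case) as a black box. Your argument avoids the reduction and the external citation by running the balancedness law $W_{i+1}^{\rm T}W_{i+1}-W_iW_i^{\rm T}=\text{const}$ directly through a compactness/rescaling scheme: normalizing by $a_k=\lVert W_1(t_k)\rVert_F$, the conserved quantities force all layers to blow up at the same Frobenius rate, the limits $\bar W_i$ are exactly balanced, the descent bound on $\lVert MX\rVert_F$ forces $\bar W_L\cdots\bar W_1 X=0$, and the time-invariance of $W_1$ on $\ker(X^{\rm T})$ forces $\bar W_1 X\neq 0$; the kernel/range coupling from balancedness then propagates $\bar W_1 X\neq 0$ up to $\bar W_L\cdots\bar W_1 X\neq 0$, a contradiction. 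The paper's route buys brevity by offloading the analytic work to the cited theorem (and in fact the same SVD/constant-block observation that you record as fact (i) is the heart of the paper's reduction). Your route buys a self-contained and more transparent proof that makes explicit the mechanism forbidding cancellation across layers, and it also quietly recovers (rather than cites) the full-rank result. Two small points worth noting if you write this up formally: the unbounded-trajectory hypothesis and the identity $\lVert W_i\rVert_F^2=\lVert W_1\rVert_F^2+c_i$ must be combined before choosing $t_k$, so that a single normalizer $a_k\to\infty$ works simultaneously for all layers; and in the induction one should say that the columns of $\bar W_i\cdots\bar W_1 X$ lie in $\mathrm{range}(\bar W_i)$, which meets $\ker(\bar W_{i+1})=\mathrm{range}(\bar W_i)^\perp$ only at the origin, so a nonzero such column survives multiplication by $\bar W_{i+1}$.
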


An existing proof of \Cref{prop:linearNN-bddgrad} under additional assumptions on network structure, initialization, input data, or target data can be found, for instance, in \cite{bah2022learning,eftekhari2020training,chen2022non}. To the best of our knowledge, the closest result to \Cref{prop:linearNN-bddgrad} is \cite[Theorem 3.2]{bah2022learning}, which shows that gradient trajectories are bounded if $XX^{\rm T}$ is of full rank. In the proof of \Cref{prop:linearNN-bddgrad}, we show that this rank assumption on $X$ can be removed and hence \Cref{prop:linearNN-bddgrad} applies to any linear neural network.

\begin{proof}[Proof of \Cref{prop:linearNN-bddgrad}] Since $f$ is locally Lipschitz and lower bounded, by \\ \Cref{prop:exist} there exists a gradient trajectory for any initial point.
By \cite[Lemma 2.1]{bah2022learning}, the gradient trajectories of $f$ satisfy the initial value problem 
\begin{subequations}\label{linearNN_dyn1}
    \begin{align}
        &\dot{W}_i = -(W_L\cdots W_{i+1})^{\rm T}(W_L\cdots W_1 X-Y)(W_{i-1}\cdots W_1X)^{\rm T}, \label{linearNN_dyn1_contr} \\
        & W_i(0)=W_i^0,\quad W_i^0\in\mathbb{R}^{d_i\times d_{i-1}} \text{ is a given constant matrix},
    \end{align}
\end{subequations}
for all $i=1,\ldots,L$. Note that if $i=L$, \Cref{linearNN_dyn1_contr} reduces to 
\begin{equation*}
    \dot{W}_L=-(W_L\cdots W_1 X-Y)(W_{L-1}\cdots W_1X)^{\rm T},
\end{equation*}
and if $i=1$, \Cref{linearNN_dyn1_contr} reduces to 
\begin{equation*}
    \dot{W}_1=-(W_L\cdots W_2)^{\rm T}(W_L\cdots W_1 X-Y)X^{\rm T}.
\end{equation*}
\vskip 1ex
Note that \cite[Theorem 3.2]{bah2022learning} proved the boundedness of gradient trajectories of $f$ when $XX^{\rm T}$ is invertible. Thus, we only need to show we can always reduce the boundedness of gradient trajectories of $f$ for general $X$ to the boundedness of gradient trajectories of another function $g$ in the same form as $f$ but with invertible $XX^{\rm T}$. Let $X=U\Sigma V^{\rm T}$ be a singular value decomposition, where $U\in\mathbb{R}^{d_0\times d_0}$ and $V\in\mathbb{R}^{d_x\times d_x}$ are orthogonal matrices, and $\Sigma\in\mathbb{R}^{d_0\times d_x}$ is a rectangular matrix satisfying 
\begin{equation*}
    \Sigma = \begin{bmatrix}
        \Lambda & 0 \\
        0 & 0
    \end{bmatrix},\quad \Lambda=\diag(\lambda_1,\ldots,\lambda_r)\succ 0,
\end{equation*}
where $r\leqslant \min\{d_0,d_x\}$. Eliminating $X$ in \Cref{linearNN_dyn1_contr}, it reduces to 
\begin{align*}
    \dot{W}_i &= -(W_L\cdots W_{i+1})^{\rm T}(W_L\cdots W_1 U\Sigma V^{\rm T}-Y)(W_{i-1}\cdots W_1U\Sigma V^{\rm T})^{\rm T} \\
    &= -(W_L\cdots W_{i+1})^{\rm T}(W_L\cdots W_1 U\Sigma-YV)(W_{i-1}\cdots W_1U\Sigma)^{\rm T}.
\end{align*}
Define $Z:=YV\in\mathbb{R}^{d_L\times d_x}$, and \Cref{linearNN_dyn1} reduces to 
\begin{subequations}\label{linearNN_dyn2}
    \begin{align}
        &\dot{W}_i = -(W_L\cdots W_{i+1})^{\rm T}(W_L\cdots W_1 U\Sigma-Z)(W_{i-1}\cdots W_1U\Sigma)^{\rm T},\label{linearNN_dyn2_contr} \\
        & W_i(0)=W_i^0,\quad \forall\,i=1,\ldots L.
    \end{align}
\end{subequations}
Denote $\overline{W}_1:=W_1U\in\mathbb{R}^{d_1\times d_0}$ and $\overline{W}_1^0:=W_1^0U\in\mathbb{R}^{d_1\times d_0}$. To keep the notation consistent, also let $\overline{W}_i:=W_i$ and $\overline{W}_i^0:=W_i^0$ for $i=2,\ldots,L$. Thus, \Cref{linearNN_dyn2} reduces to 
\begin{subequations}\label{linearNN_dyn3}
    \begin{align}
        &\dot{\overline{W}}_i = -(\overline{W}_L\cdots \overline{W}_{i+1})^{\rm T}(\overline{W}_L\cdots \overline{W}_1 \Sigma-Z)(\overline{W}_{i-1}\cdots \overline{W}_1\Sigma)^{\rm T}, \\
        & \overline{W}_i(0)=\overline{W}_i^0,\quad \forall\,i=1,\ldots L.
    \end{align}
\end{subequations}
Partition the matrices $\overline{W}_1$, $\overline{W}_1^0$, and $Z$ into two column blocks: 
\begin{equation*}
    \overline{W}_1 = \begin{bmatrix}
        \overline{W}_{11} & \overline{W}_{12}
    \end{bmatrix},\quad \overline{W}_1^0 = \begin{bmatrix}
        \overline{W}_{11}^0 & \overline{W}_{12}^0
    \end{bmatrix},\quad Z = \begin{bmatrix}
        Z_1 & Z_2 
    \end{bmatrix},
\end{equation*}
where $\overline{W}_{11}$, $\overline{W}_{11}^0$, and $Z_1$ consist of the first $r$ columns of $\overline{W}_1$, $\overline{W}_1^0$ and $Z$ respectively. Thus, when $i=1$, \Cref{linearNN_dyn3} can be reduced into
\begin{align*}
    &\dot{\overline{W}}_{11} = -(\overline{W}_L\cdots \overline{W}_2)^{\rm T}(\overline{W}_L\cdots \overline{W}_2\overline{W}_{11}\Lambda-Z_1)\Lambda^{\rm T}, \quad \dot{\overline{W}}_{12} = 0, \\ 
    &\overline{W}_{11}(0) = \overline{W}_{11}^0,\quad \overline{W}_{12}(0) = \overline{W}_{12}^0.
\end{align*}
When $i=2,\ldots,L$, \Cref{linearNN_dyn3} can be reduced into 
\begin{align*}
    &\dot{\overline{W}}_i = -(\overline{W}_L\cdots \overline{W}_{i+1})^{\rm T}(\overline{W}_L\cdots \overline{W}_2\overline{W}_{11} \Lambda-Z_1)(\overline{W}_{i-1}\cdots \overline{W}_2\overline{W}_{11}\Lambda)^{\rm T}, \\
    & \overline{W}_i(0)=\overline{W}_i^0.
\end{align*}
It indicates that $\overline{W}_{12}(t)=\overline{W}_{12}^0$ for all $t\geqslant 0$. Denote $\widetilde{W}_1:=\overline{W}_{11}$ and  $\widetilde{W}_1^0:=\overline{W}_{11}^0$. To keep the notation consistent, also let $\widetilde{W}_i:=\overline{W}_i$ and $\widetilde{W}_i^0:=\overline{W}_i^0$ for $i=2,\ldots,L$. Therefore, \Cref{linearNN_dyn3} reduces to 
\begin{subequations}\label{linearNN_dyn4}
    \begin{align}
        &\dot{\widetilde{W}}_i = -(\widetilde{W}_L\cdots \widetilde{W}_{i+1})^{\rm T}(\widetilde{W}_L\cdots \widetilde{W}_1\Lambda-Z_1)(\widetilde{W}_{i-1}\cdots \widetilde{W}_1\Lambda)^{\rm T}, \\
        & \widetilde{W}_i(0)=\widetilde{W}_i^0,\quad \forall\,i=1,\ldots L.
    \end{align}
\end{subequations}
Define the new function $g$ as 
\begin{equation*}
    g(\widetilde{W}_1,\ldots,\widetilde{W}_L) := \frac{1}{2}\lVert \widetilde{W}_L\cdots \widetilde{W}_1\Lambda-Z_1\rVert_F^2. 
\end{equation*}
Notice that the gradient trajectories of $g$ satisfy \Cref{linearNN_dyn4}. To prove $f$ has bounded gradient trajectories, it is equivalent to prove $g$ has bounded gradient trajectories, because $\lVert W_1\rVert_F=\lVert W_1U\rVert_F=\lVert \overline{W}_1\rVert_F$ and $\lVert \overline{W}_1(t)\rVert_F^2=\lVert \widetilde{W}_1(t)\rVert_F^2+\lVert \overline{W}_{12}^0\rVert_F^2$ for all $t\geqslant 0$. Since $\Lambda\Lambda^{\rm T}$ is invertible, by \cite[Theorem 3.2]{bah2022learning}, $g$ has bounded gradient trajectories, and so does $f$. 
\end{proof}

With \Cref{prop:linearNN-bddgrad}, we verified that $f$ satisfies \Cref{assm:funcClass}. Thus, \Cref{eq:linearNN-obj} has no spurious setwise local minima if and only if it has no spurious local minima. Since a local minimum at infinity is an unbounded setwise local minimum, and $f$ has no spurious local minima, we conclude that $f$ has no spurious local minima at infinity. This proves the first result in \Cref{cor:main}.

\subsection{One dimensional deep sigmoid neural network}

Though famous for its benign theoretical properties, linear neural network is rarely used in practice because of its low representation power. We want to take a step further in the case of nonlinear deep neural network. In this subsection, we focus on neural network with sigmoid activation function in one dimensional case. Landscape analysis of one or two-hidden layer sigmoid neural network can be found, for instance, in \cite{venturi2019spurious,ding2022suboptimal,li2022benefit}. However, none of the results above can be easily generalized to arbitrary many layers. 

Consider minimizing the following loss function of sigmoid neural network
\begin{equation}\label{eq:sigmoidNN-obj}
    f(w_1,\ldots,w_L) := \frac{1}{2}(w_L\sigma(w_{L-1}\cdots \sigma(w_1x))-y)^2, 
\end{equation}
where $\sigma(z):= (1+e^{-z})^{-1}$ is the sigmoid function and $w_i,x,y\in\mathbb{R}$ for all $i=1,\ldots,L$. We want to apply  \Cref{thm:main-intro} to conclude that \Cref{eq:sigmoidNN-obj} has no spurious setwise local minimum, and hence no local minima at infinity. Again, the first three assumptions in \Cref{assm:funcClass} are easy to verify: $f$ is nonnegative, hence bounded below; $f$ is continuously differentiable, hence locally Lipschitz, and admits a chain rule. Note that $f$ is not semi-algebraic, but it is definable in the real exponential field \cite{wilkie1996model} \cite[Section 6.2]{bolte2020conservative}, so by Morse–Sard theorem for definable functions \cite[Corollary 9(ii)]{bolte2007clarke}, it has finitely many critical values. 

Again, it remains to show \Cref{eq:sigmoidNN-obj} has bounded gradient trajectories. However, the techniques in the proof of  \Cref{prop:linearNN-bddgrad} cannot be adapted to this case because the auto-balancing property in \cite[Theorem 2.1]{du2018algorithmic} does not hold. Surprisingly, it is still true that \Cref{eq:sigmoidNN-obj} has bounded gradient trajectories. 

\begin{proposition}\label{prop:sigmoidNN-bddgrad}
One dimensional sigmoid neural network with loss function \Cref{eq:sigmoidNN-obj} has bounded gradient trajectories. 
\end{proposition}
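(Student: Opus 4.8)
The plan is to combine the monotone decrease of $f$ along the trajectory with a downward induction over the layers, using only elementary properties of the sigmoid. Throughout write $\phi_0:=x$, $\phi_k:=\sigma(w_k\phi_{k-1})$ for $1\le k\le L-1$ (the hidden activations), $z_k:=w_k\phi_{k-1}$ (the pre-activations), and $e:=w_L\phi_{L-1}-y$ (the residual), so that $f=\tfrac12 e^2$; all of these are scalars.

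By \Cref{prop:exist} a gradient trajectory $w(\cdot)=(w_1(\cdot),\dots,w_L(\cdot))$ exists from any initial point. Since $f$ is $C^1$ it admits a chain rule, so $f\circ w$ is non-increasing; hence $w(t)$ stays in the sublevel set $\{f\le f(w(0))\}$, which gives $|e(t)|\le\sqrt{2f(w(0))}=:\rho$ for all $t\ge 0$ and $\int_0^\infty\|\dot w(t)\|^2\,dt\le f(w(0))<\infty$. In particular the network output $w_L\phi_{L-1}=e+y$ is bounded and each $\phi_k\in(0,1)$ is trivially bounded, so any escape to infinity of $w(t)$ can occur only through the weights themselves and only in the constrained fashion that the boundedness of the output and of the $\phi_k$ permits. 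Two degenerate cases are disposed of immediately: if $x=0$ the first layer drops out and $L$ decreases; if $y=0$ then $\tfrac{d}{dt}\tfrac12 w_L^2=-w_L^2\phi_{L-1}^2\le 0$, so $w_L$ is bounded outright.

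Next I would record the backpropagation identities. With adjoints $q_{L-1}:=w_L$ and $q_{k-1}:=q_k w_k\sigma'(z_k)$ one has $\partial f/\partial w_L=e\,\phi_{L-1}$ and $\partial f/\partial w_k=e\,q_k\,\sigma'(z_k)\,\phi_{k-1}$ for $1\le k\le L-1$, whence the ``dynamic balance'' relations $\tfrac{d}{dt}\tfrac12 w_L^2=-e(e+y)$ and $\tfrac{d}{dt}\tfrac12 w_k^2=-e\,q_k\,\sigma'(z_k)\,z_k$. The properties of $\sigma$ that matter are $0<\sigma'=\sigma(1-\sigma)\le\tfrac14$, $\sup_z|z|\,\sigma'(z)<\infty$, the identity $w_L\sigma'(z_{L-1})=(e+y)(1-\phi_{L-1})$ (bounded), and the exponential tails $1-\sigma(z)\le e^{-z}$, $\sigma(z)\le e^{z}$; note also that once $w_{k+1},\dots,w_L$ are known to be bounded, so is $q_k=w_L\prod_{j=k+1}^{L-1}w_j\sigma'(z_j)$. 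Now induct downward on $k=L,L-1,\dots,1$: assume $w_{k+1},\dots,w_L$ bounded (vacuous for $k=L$) and suppose, for contradiction, that $w_k(\cdot)$ is unbounded. A case analysis on the sign of the escape shows that $w_k$ large forces the pre-activation $z_k$ toward $\pm\infty$ in a way that drives one of $\sigma(z_k)$, $1-\sigma(z_k)$, hence $\sigma'(z_k)$, to zero exponentially, and the dynamic balance relation then already confines $w_k^2$ to at-most-linear growth. To upgrade this to an honest bound one passes to the asymptotics near the limiting level set $\{f=\lim_t f(w(t))\}$: the residual $e(t)$ and the output converge, and because the loss grows quadratically and non-degenerately away from that set this convergence is exponential in $t$; feeding the exponential decay of $e$ back into $\partial f/\partial w_k=e\,q_k\,\sigma'(z_k)\,\phi_{k-1}$ (and exploiting the cancellation it produces in $\phi_{k-1}$ or $1-\phi_{k-1}$) makes $\dot w_k$ integrable, so $w_k$ is bounded after all --- a contradiction. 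For $L=2$ this last computation can be carried out completely explicitly; for larger $L$ the induction reduces layer $k$ to a situation structurally identical to $L=2$, given boundedness of the layers above it.

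The crux --- and the main obstacle --- is this last step. ``Escape to infinity'' inside a sublevel set of $f$ is here at best \emph{logarithmically} slow: a naive Gr\"onwall estimate of the form $\dot w_k=O(e^{-c\,w_k})$ only yields $w_k=O(\log t)$, which is not a bound, and since $\int^\infty t^{-1}\,dt=\infty$ even the finite energy $\int_0^\infty\|\dot w\|^2\,dt<\infty$ does not by itself rule it out. What actually closes the argument is the coupling between consecutive layers --- the same residual $e$ and the same pre-activations recur in several weight equations --- together with the essentially exponential rate at which the flow approaches its limiting level set, which forces $e$ to decay fast enough that the tangential drift of the would-be-unbounded weight has finite total variation. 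Establishing that exponential transverse convergence uniformly along an a priori unbounded trajectory is the delicate point; it is, in spirit, exactly the kind of quantitative estimate whose \emph{failure} is what allows spurious local minima at infinity to exist in the first place.
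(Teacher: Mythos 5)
Your proposal correctly identifies the right ingredients (monotone descent of $f$, a downward induction over layers, the backpropagation / dynamic-balance identities), but it does not close the argument, and you say so yourself: the claimed exponential decay of the residual $e(t)$ toward its limiting value is left unestablished, and you label it ``the delicate point.'' That claim is in fact false to rely on in this generality --- nothing in the hypotheses gives you a uniform transverse nondegeneracy along an a priori unbounded trajectory, and indeed the whole difficulty of the subject is that one cannot in general extract a rate. So the proposal contains a genuine gap precisely at its load-bearing step, and the $O(e^{-c w_k})$ Gr\"onwall observation you raise against yourself is an accurate diagnosis that the rate route does not trivially go through.

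The paper's proof avoids any rate estimate and is entirely elementary, by exploiting a structural fact special to the single-data-point loss: $\dot w_L = -e\,\sigma(w_{L-1}p_{L-2})$ with $\sigma>0$, so $\dot w_L(t^\ast)=0$ forces $e(t^\ast)=0$, which zeroes the \emph{entire} gradient --- the trajectory has reached a critical point and stops, and is then trivially bounded. Consequently, on any nontrivial trajectory $\dot w_L$ never changes sign, hence $w_L$ is monotone and eventually of constant sign; since each $\dot w_i$ is a positive multiple of $\dot w_{i+1} w_{i+1}$, sign preservation cascades downward and every $w_i$ is eventually monotone. From there the boundedness is read off layer by layer via comparison inequalities of the form $\dot w_{i-1} \leqslant \dot w_i w_i$ (so $w_{i-1}-\tfrac12 w_i^2$ is monotone) together with the boundedness of $f$ along the trajectory, and a separate integration for the first layer using $w_1 x + e^{w_1 x}$. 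You should replace the exponential-rate argument by this sign-preservation observation, which is where the one-data-point hypothesis actually enters; note that this step is exactly what fails for two data points, as Example~\ref{eg:2data-sigmoid} shows.
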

\begin{proof}
Since $f$ is locally Lipschitz and lower bounded, by \Cref{prop:exist} there exists a gradient trajectory for any initial point. For simplicity, define $p_i$ for $i=0,\ldots,L-2$ recursively by $p_0:=x$, $p_1:=\sigma(w_1x)$ and $p_{i+1}:=\sigma(w_{i+1}p_i)$. The gradient trajectories of $f$ satisfy 
\begin{subequations}
    \begin{align}
    \dot{w}_L &= -(w_L\sigma(w_{L-1} p_{L-2})-y)\sigma(w_{L-1} p_{L-2}),  \\
    \dot{w}_i &= \frac{p_{i-1}}{1+e^{w_ip_{i-1}}}\dot{w}_{i+1}w_{i+1},\quad i=L-1,\ldots,1. \label{eq:sigmoid_middle}
\end{align}
\end{subequations}
We will prove each $w_i$ is bounded inductively from the last layer to the first layer. The relation between the last two layers $w_L$ and $w_{L-1}$, and the relation between the first two layers can be regarded as the base cases. 
\vskip 1ex
We claim that there exists a time $T$ such that $\dot{w}_i$ and $w_i$ does not change sign for all $t\geqslant T$ and for all $i$. To verify this, first notice that the claim is true for the last layer, i.e., $\dot{w}_L$ and $w_L$ will not change sign for all $t\geqslant T$.  Suppose $\dot{w}_L$ changes sign, by continuity and mean value theorem, there exists $t^*>0$ such that $\dot{w}_L(t^*)=0$. However, $\dot{w}_L(t^*)=0$ implies $\dot{w}_i(t^*)=0$ for all $i$, meaning that a critical point is achieved and the gradient trajectory is stopped for all $t\geqslant t^*$. In this case, all $w_i$'s are trivially bounded. Thus, we assume the trajectory will never stop at a finite time. In this case, either $\dot{w}_L(t)>0$ or $\dot{w}_L(t)<0$ for all $t\geqslant 0$. Since $w_L$ is monotonic, it either keeps the sign unchanged or changes the sign only once. Thus, there exists $T_L>0$ such that $w_L$ does not change sign on $[T_L,\infty)$. Notice that for all $i\geqslant 2$, $p_{i-1}(t)\in(0,1)$ for all $t\geqslant 0$. Since $\dot{w}_Lw_L$ does not change sign on $[T_L,\infty)$, \Cref{eq:sigmoid_middle} implies that $\dot{w}_{L-1}$ does not change sign on $[T_L,\infty)$ either. Therefore, we conclude that $w_{L-1}$ is monotonic. Similarly, there exists $T_{L-1}>T_L$ such that $\dot{w}_{L-1}$ and $w_{L-1}$ does not change sign on $[T_{L-1},\infty)$. Recursively using the above argument, we can show the claim is true for all $i\geqslant 2$ on $[T_2,\infty)$. For $i=1$, although $p_0=x$ may not be in $(0,1)$, since $x$ is a constant, the fact that $\dot{w}_2$ and $w_2$ do not change sign still implies that $\dot{w}_1$ does not change sign and hence there exists $T_1>T_2$ such that $w_1$ does not change sign on $[T_1,\infty)$. Therefore, the claim holds for $i=1,\ldots,L$ by choosing $T=T_1$. 
\vskip 1ex
By the claim proved in the last paragraph, for $i=1,\ldots,L$, either $\dot{w}_iw_i$ is nonnegative or $\dot{w}_iw_i$ is negative on $[T,\infty)$. Now we are going to prove each $w_i$ is bounded. The first step is to prove the last two layers $w_L$ and $w_{L-1}$ are bounded. Consider the case where $\dot{w}_Lw_L$ is nonnegative on $[T,\infty)$. \Cref{eq:sigmoid_middle} implies that $\dot{w}_{L-1}\geqslant 0$ and $w_{L-1}$ is increasing over $[T,\infty)$, so there exists a constant $c_{L-1}$ such that $w_{L-1}(t)\geqslant c_{L-1}$ for all $t\geqslant 0$. Since $p_{L-2}\in(0,1)$, we have $\sigma(w_{L-1}p_{L-2})\geqslant \sigma(-\vert c_{L-1}\vert)>0$. Again, by \cite[Lemma 5.2]{davis2020stochastic}, $\frac{d}{dt}f(w_1,\ldots,w_L)\leqslant 0$ and $f(w_1,\ldots,w_L)\leqslant C$ for some constant $C$ on $[0,\infty)$. Thus, it is easy to see $\vert w_L\vert\sigma(w_{L-1}p_{L-2})\leqslant C_1$ for some constant $C_1$ on $[0,\infty)$. Since $\sigma(w_{L-1}p_{L-2})\in [\sigma(-\vert c_{L-1}\vert),1)$, we conclude $\vert w_L\vert$ is bounded. Suppose $w_{L-1}$ is unbounded. Since it is increasing and does not change sign, $w_{L-1}(t)>0$ for all $t\geqslant T$ and $w_{L-1}(t)\to \infty$ as $t\to\infty$. By \Cref{eq:sigmoid_middle}, 
\begin{equation}
    \dot{w}_{L-1} = \frac{p_{L-2}}{1+e^{w_{L-1}p_{L-2}}}\dot{w}_Lw_L \leqslant \dot{w}_Lw_L \label{eq:sigmoid_middle_up}, 
\end{equation}
because $p_{L-2}\in(0,1)$ and $1+e^{w_Lp_{L-2}}>1$. By \Cref{eq:sigmoid_middle_up}, $w_{L-1}-\frac{1}{2}w_L^2$ is a decreasing function on $[T,\infty)$. Hence, $w_{L-1}-\frac{1}{2}w_L^2\leqslant C_2$ for some constant $C_2$. Notice that $w_L$ is bounded but $w_{L-1}(t)\to\infty$ as $t\to\infty$, so a contradiction occurs. Therefore, $w_{L-1}$ is bounded. 
\vskip 1ex
Now we consider the case where $\dot{w}_Lw_L$ is negative on $[T,\infty)$. In this case, \Cref{eq:sigmoid_middle} implies $\dot{w}_{L-1}\leqslant 0$, so $w_{L-1}$ is decreasing on $[T,\infty)$ and there exists a constant $d_{L-1}$ such that $w_{L-1}\leqslant d_{L-1}$. Since $p_{L-2}/(1+e^{w_{L-1}p_{L-2}})\in(0,1)$ and $\dot{w}_Lw_L\leqslant 0$ on $[T,\infty)$, we have $\dot{w}_{L-1}\geqslant \dot{w}_Lw_L$. This shows $w_{L-1}-\frac{1}{2}w_L^2$ is increasing on $[T,\infty)$, and hence $w_{L-1}\geqslant \tilde{d}_{L-1}$ for some constant $\tilde{d}_{L-1}$. Therefore, $w_{L-1}\in[\tilde{d}_{L-1},d_{L-1}]$ is bounded. By exactly the same argument as in the case when $\dot{w}_Lw_L$ is nonnegative, we know $\sigma(w_{L-1}p_{L-2})\in [\sigma(-\vert\tilde{d}_{L-1}\vert),1)$ and $w_L$ is bounded by using the boundedness of objective function $f$. 
\vskip 1ex
Up to now, we have proved boundedness for the last two layers $w_L$ and $w_{L-1}$. For $i=2,\ldots,L-2$, by discussing two cases $\dot{w}_{i+1}w_{i+1}\geqslant 0$ and $\dot{w}_{i+1}w_{i+1}\leqslant 0$, together with the boundedness of $w_{i+1}$, we can prove that $w_i$ is bounded by exactly the same argument as we did in the last two paragraphs. The induction starts with proving $w_{L-2}$ is bounded and ends with proving $w_2$ is bounded. Once we prove $w_2$ is bounded, consider the relation between $w_1$ and $w_2$, 
\begin{equation*}
    (1+e^{w_1x})\dot{w}_1 = x\dot{w}_2w_2. 
\end{equation*}
If $x=0$, then $\dot{w}_1=0$ implies $w_1$ is a constant over $[0,\infty)$, so it must be bounded. Suppose $x\ne 0$, by taking integration with respect to $t$ and multiplying $x$ on both sides, we have 
\begin{equation*}
    w_1x+e^{w_1x} = \frac{x^2}{2}w_2^2 + C_3. 
\end{equation*}
Let $z=w_1x$, then $z+e^z\to \pm\infty$ as $z\to\pm\infty$. Thus, the boundedness of $w_2$ implies the boundedness of $z=w_1x$. Since $x\ne 0$ is a constant, $w_1$ is bounded. Therefore, we proved that $w_i$ is bounded for all $i=1,\ldots,L$. 
\end{proof}

With \Cref{prop:sigmoidNN-bddgrad}, we can conclude that $f$ has no spurious setwise local minimum if and only if it has no spurious local minima. However, from the gradient of $f$, we can easily see that any critical point of it will be a global minimum, so $f$ has neither  spurious local minimum nor spurious setwise local minimum. This verifies the second result in \Cref{cor:main}. 

Unfortunately, unlike linear neural networks, the result in \Cref{prop:sigmoidNN-bddgrad} is not true in general even in one-hidden layer case, if more than one data point is given; see \Cref{eg:2data-sigmoid}. However, it is still an open question whether the gradient trajectories will be bounded in the over-parameterized case (in which case there exists at least one achievable global minimum). 

\begin{example}\label{eg:2data-sigmoid}
Consider the following function 
\begin{equation}\label{eq:sigmoid-counter}
    f(w_1,w_2):=\frac{1}{2}[(w_2\sigma(w_1)-1)^2+(w_2\sigma(-w_1)+1)^2]. 
\end{equation}
The above function represents a one-hidden layer sigmoid neural network with two data $(x_1,y_1)=(1,1)$ and $(x_2,y_2)=(-1,-1)$. By directly computing the gradient, one can easily see that \Cref{eq:sigmoid-counter} has only one critical point $(0,0)$ which is a strict saddle with $f(0,0)=1$. The global minimum is asymptotically attained as $w_1\to\pm\infty$ and $w_2\to 1-2(1+e^{2w_1})^{-1}$, and its corresponding objective value approaches to $1/2$. In this case, the gradient trajectory of \Cref{eq:sigmoid-counter} starting at any point $x_0$ such that $f(x_0)<1$ must be unbounded. 
\end{example}

\subsection{Matrix sensing}

Matrix sensing is a widely used model in computer vision and statistics; see for instance \cite{chi2019nonconvex,recht2010guaranteed}. Given $r\geqslant 1$, the goal is to recover an unknown target matrix $M\in\mathbb{R}^{n_1\times n_2}$ of rank less than or equal to $r$ from a set of linear measurements $b_i=\langle A_i,M\rangle_F$, where $A_i\in\mathbb{R}^{n_1\times n_2}$ for $i=1,\ldots,m$ are sensing matrices and $\langle \cdot,\cdot\rangle_F$ is the Frobenius inner product. In order to do so, we minimize the mean square loss 
\begin{equation}\label{eq:recovery-obj}
    f(X,Y) := \frac{1}{2m}\sum_{i=1}^m(\langle A_i,XY^{\rm T}\rangle_F-b_i)^2. 
\end{equation}
where $X\in\mathbb{R}^{n_1\times r}$ and $Y\in\mathbb{R}^{n_2\times r}$.
The landscape of \Cref{eq:recovery-obj} has been studied widely, for example, in \cite{zhu2018global,park2017non,li2020global}. Most of these work are based on the restrictive isometry property (RIP) of sensing matrices. A set of sensing matrices $A_i$ for $i=1,\ldots,m$ are said to have $(r,\delta_r)$-RIP \cite{recht2010guaranteed} if there exists $\delta_r\in(0,1)$ such that 
\begin{equation*}
    (1-\delta_r)\|\widetilde{M}\|_F^2 \leqslant \frac{1}{m}\sum_{i=1}^m\langle A_i,\widetilde{M}\rangle_F^2 \leqslant (1+\delta_r)\|\widetilde{M}\|_F^2
\end{equation*}
holds for any matrix $\widetilde{M}$ with $\rank(\widetilde{M})\leqslant r$. To the best of our knowledge, the minimal assumptions to guarantee no spurious local minima for \Cref{eq:recovery-obj} is for the sensing matrices to satisfy $(4r,\delta_{4r})$-RIP with $\delta_{4r}\leqslant 1/5$, as proposed in \cite[Theorem III.1]{li2020global}. 

However, \Cref{thm:main-intro} is applicable to matrix sensing under a weaker condition than RIP. The first four assumptions in \Cref{assm:funcClass} hold because of exactly the same reasons as in the linear neural network case. Thus, it suffices to show \Cref{eq:recovery-obj} has bounded gradient trajectories. A sufficient condition is to require the sensing matrices to be \textit{lower bounded}, i.e., there exists a constant $c>0$ such that for any matrix $\widetilde{M}\in\mathbb{R}^{n_1\times n_2}$ with $\rank(\widetilde{M})\leqslant r$, 
\begin{equation*}
    \frac{1}{m}\sum_{i=1}^m\langle A_i,\widetilde{M}\rangle_F^2 \geqslant c\lVert \widetilde{M}\rVert_F^2. 
\end{equation*}
It is easy to see any level of RIP will imply the existence of such a constant $c$. 

\begin{proposition}\label{prop:ms-bddgrad}
Matrix sensing with loss function \Cref{eq:recovery-obj} and lower bounded sensing matrices has bounded gradient trajectories. 
\end{proposition}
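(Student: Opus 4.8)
The plan is to exploit the bilinear structure of \Cref{eq:recovery-obj} through two facts about its gradient flow: that the objective is nonincreasing along it, and that the ``balancedness'' matrix $X^{\rm T}X-Y^{\rm T}Y$ is conserved. First, since $f$ is continuously differentiable (hence locally Lipschitz) and nonnegative (hence bounded below), \Cref{prop:exist} furnishes, for any $(X_0,Y_0)$, a gradient trajectory $(X(t),Y(t))$, which is the unique solution of
\begin{equation*}
    \dot{X} = -G(X,Y)\,Y, \qquad \dot{Y} = -G(X,Y)^{\rm T}X, \qquad G(X,Y) := \frac{1}{m}\sum_{i=1}^m\big(\langle A_i,XY^{\rm T}\rangle_F-b_i\big)A_i.
\end{equation*}
A direct computation shows that $\tfrac{d}{dt}(X^{\rm T}X)$ and $\tfrac{d}{dt}(Y^{\rm T}Y)$ both equal $-X^{\rm T}GY-Y^{\rm T}G^{\rm T}X$, so $\tfrac{d}{dt}(X^{\rm T}X-Y^{\rm T}Y)=0$; hence $X(t)^{\rm T}X(t)-Y(t)^{\rm T}Y(t)=\Lambda$ for all $t\geqslant 0$, where $\Lambda:=X_0^{\rm T}X_0-Y_0^{\rm T}Y_0$ is a fixed symmetric $r\times r$ matrix. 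This is the matrix-sensing analogue of the auto-balancing property \cite[Theorem 2.1]{du2018algorithmic}.

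Next I would use the descent property. Since $f$ is continuously differentiable it admits a chain rule, so by \cite[Lemma 5.2]{davis2020stochastic} the map $t\mapsto f(X(t),Y(t))$ is nonincreasing; set $C_0:=f(X_0,Y_0)$, so that $\sum_{i=1}^m\big(\langle A_i,X(t)Y(t)^{\rm T}\rangle_F-b_i\big)^2\leqslant 2mC_0$. Writing $\langle A_i,XY^{\rm T}\rangle_F=\big(\langle A_i,XY^{\rm T}\rangle_F-b_i\big)+b_i$, the triangle inequality in $\mathbb{R}^m$ together with the lower-boundedness of the sensing matrices applied to the rank-$\leqslant r$ matrix $X(t)Y(t)^{\rm T}$ gives
\begin{equation*}
    c\,\lVert X(t)Y(t)^{\rm T}\rVert_F^2 \leqslant \frac{1}{m}\sum_{i=1}^m\langle A_i,X(t)Y(t)^{\rm T}\rangle_F^2 \leqslant \frac{1}{m}\big(\sqrt{2mC_0}+\lVert b\rVert_2\big)^{2},
\end{equation*}
so $\lVert X(t)Y(t)^{\rm T}\rVert_F\leqslant \rho$ for a constant $\rho$ depending only on $c$, $m$, $C_0$, and $b$, uniformly in $t$.

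Finally I would convert the two bounds ``$X^{\rm T}X-Y^{\rm T}Y=\Lambda$'' and ``$\lVert XY^{\rm T}\rVert_F\leqslant\rho$'' into bounds on $\lVert X\rVert_F$ and $\lVert Y\rVert_F$. Writing $A:=X(t)^{\rm T}X(t)\succeq 0$ and $B:=Y(t)^{\rm T}Y(t)\succeq 0$, we have $A-B=\Lambda$ and $\operatorname{tr}(AB)=\lVert XY^{\rm T}\rVert_F^2\leqslant\rho^2$. Substituting $A=B+\Lambda$ yields $\operatorname{tr}(B^2)\leqslant\rho^2-\operatorname{tr}(\Lambda B)\leqslant\rho^2+\lVert\Lambda\rVert_{\mathrm{op}}\operatorname{tr}(B)$, and since $\operatorname{tr}(B)^2\leqslant r\operatorname{tr}(B^2)$ this is a quadratic inequality bounding $\operatorname{tr}(B)=\lVert Y(t)\rVert_F^2$ by a constant; then $\operatorname{tr}(A)=\operatorname{tr}(B)+\operatorname{tr}(\Lambda)$ bounds $\lVert X(t)\rVert_F^2$. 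Hence $\lVert X(t)\rVert_F^2+\lVert Y(t)\rVert_F^2$ is bounded by a constant independent of $t$, which is precisely what \Cref{def:bdd-subgrad} demands. I expect the conservation law, and its combination with this last algebraic step, to be the crux: descent alone only controls the product $XY^{\rm T}$, and without the invariant $X^{\rm T}X-Y^{\rm T}Y=\Lambda$ the factors could escape to infinity (e.g.\ along $X=tX_0$, $Y=t^{-1}Y_0$, which keeps $XY^{\rm T}$ fixed), so the argument genuinely relies on extracting that invariant from the bilinear structure.
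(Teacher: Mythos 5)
Your proof is correct and takes essentially the same approach as the paper: the conservation law $X^{\rm T}X-Y^{\rm T}Y\equiv\mathrm{const}$ and the descent-plus-lower-boundedness argument bounding $\lVert XY^{\rm T}\rVert_F$ are exactly the paper's two ingredients. Your closing step (a quadratic inequality in $\mathrm{tr}(Y^{\rm T}Y)$) is a minor algebraic variant of the paper's, which instead expands $\lVert X^{\rm T}X-Y^{\rm T}Y\rVert_F^2$ to get $\lVert X^{\rm T}X\rVert_F^2+\lVert Y^{\rm T}Y\rVert_F^2=\lVert X^{\rm T}X-Y^{\rm T}Y\rVert_F^2+2\lVert XY^{\rm T}\rVert_F^2$ and then applies Cauchy--Schwarz to bound $\lVert X\rVert_F^4+\lVert Y\rVert_F^4$.
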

\begin{proof}
Since $f$ is locally Lipschitz and lower bounded, by \Cref{prop:exist} there exists a gradient trajectory for any initial point. The gradient trajectories of $f$ satisfy the initial value problem
\begin{align*}
    &\dot{X} = -\frac{1}{m}\sum_{i=1}^m(\langle A_i,XY^{\rm T}\rangle_F-b_i)A_iY, \\
    &\dot{Y} = -\frac{1}{m}\sum_{i=1}^m(\langle A_i,XY^{\rm T}\rangle_F-b_i)A_i^{\rm T}X, \\
    &X(0) = X_0,\quad Y(0) = Y_0. 
\end{align*}
Notice that $\dot{X}^{\rm T}X=Y^{\rm T}\dot{Y}$ and $X^{\rm T}\dot{X}=\dot{Y}^{\rm T}Y$, so 
\begin{equation*}
    \frac{d}{dt}(X^{\rm T}X-Y^{\rm T}Y) = \dot{X}^{\rm T}X+X^{\rm T}\dot{X} - \dot{Y}^{\rm T}Y - Y^{\rm T}\dot{Y} = 0. 
\end{equation*}
This implies that $X^{\rm T}X-Y^{\rm T}Y=C$ where $C\in\mathbb{R}^{r\times r}$ is a constant. Since the function value is decreasing along gradient trajectories \cite[Lemma 5.2]{davis2020stochastic}, there exists a constant $c_1$ such that $f(X(t),Y(t))\leqslant c_1$ for all $t\geqslant 0$. Combined with the assumption that sensing matrices are lower bounded, there exist constants $c$ and $c_2$ such that
\begin{align*}
    c\lVert XY^{\rm T}\rVert_F^2 &\leqslant\frac{1}{m}\sum_{i=1}^m\langle A_i,XY^{\rm T}\rangle_F^2\leqslant \frac{1}{m}\sum_{i=1}^m[2(\langle A_i,XY^{\rm T}\rangle_F-b_i)^2+2b_i^2] \\
    &= 2f(X,Y)+\frac{2}{m}\sum_{i=1}^mb_i^2 \leqslant 2c_1+\frac{2}{m}\sum_{i=1}^mb_i^2 =: c_2. 
\end{align*}
We have $\lVert XY^{\rm T}\rVert_F^2\leqslant c_3 := c_2/c$. Notice that 
\begin{equation*}
    \lVert X^{\rm T}X\rVert_F^2+\lVert Y^{\rm T}Y\rVert_F^2 = \lVert X^{\rm T}X-Y^{\rm T}Y\rVert_F^2+2\lVert XY^{\rm T}\rVert_F^2 \leqslant \lVert C\rVert_F^2+2c_3. 
\end{equation*}
Define the constant $c_4:=2c_3+\lVert C\rVert_F^2$. By the Cauchy-Schwarz inequality, 
\begin{equation*}
    \lVert X\rVert_F^4+\lVert Y\rVert_F^4\leqslant \rank(X)\lVert X^{\rm T}X\rVert_F^2+\rank(Y)\lVert Y^{\rm T}Y\rVert_F^2 \leqslant (n_1+n_2+r)c_4. 
\end{equation*}
Thus, $X$ and $Y$ are bounded. 
\end{proof}

Therefore, \Cref{thm:main-intro} says that matrix sensing has no spurious setwise local minima if and only if it has no spurious local minima, given that the sensing matrices are lower bounded. Equipped with $(4r,\delta_{4r})$-RIP where $\delta_{4r}\leqslant 1/5$, we conclude that matrix sensing has no spurious local minima at infinity, as shown in the third statement in \Cref{cor:main}.

\subsection{Nonsmooth matrix factorization}

In this subsection, we consider the application of \Cref{thm:main-intro} in a nonsmooth setting, namely, the nonsmooth matrix factorization problem. We consider minimizing the loss function 
\begin{equation}\label{eq:nonsmooth-obj}
    f(X,Y) := \| XY^{\rm T} - M \|_1, 
\end{equation}
where $X\in\mathbb{R}^{m\times r}$, $Y\in\mathbb{R}^{n\times r}$ are decision variables and $M\in\mathbb{R}^{m\times n}$ is the given data matrix. Here $\|A\|_1:=\sum_{i=1}^m\sum_{j=1}^n |A_{ij}|$ for any $A\in\mathbb{R}^{m\times n}$. In robust principal component analysis (PCA) problem with sparse noise, \Cref{eq:nonsmooth-obj} is usually used as a surrogate function for the original $\ell_0$-norm formulation; see \cite{gillis2018complexity,charisopoulos2021low}. There are few landscape results of \Cref{eq:nonsmooth-obj} in the general rank case.  However, if $\rank(M)=1=r$, \Cref{eq:nonsmooth-obj} is shown to have no spurious local minima if every entry $M_{ij}$ of $M$ is nonzero \cite{josz2021nonsmooth}. 

It is hard to analyze \Cref{eq:nonsmooth-obj} because it is nonsmooth, nonconvex, and noncoercive. Despite all those ``non'' properties, we show that \Cref{thm:main-intro} is still applicable to \Cref{eq:nonsmooth-obj} without any rank assumption on $M$. As a corollary, when $\rank(M)=1$ and every entry of $M$ is nonzero, \Cref{eq:nonsmooth-obj} has no spurious setwise local minimum, hence no spurious local minima at infinity. Again, the first four assumptions in \Cref{assm:funcClass} are easy to check: $f$ is bounded below because it is nonnegative; $f$ is locally Lipschitz because of \cite[Theorem 2.3.10]{clarke1990}; since $f$ is semi-algebraic, by \cite[Corollary 5.4]{drusvyatskiy2015curves} and \cite[Corollary 1.1]{pham2016genericity}, it admits a chain rule and has finitely many critical values. 

To verify \Cref{eq:nonsmooth-obj} has bounded subgradient trajectories, we discover that the auto-balancing property in \cite[Theorem 2.2]{du2018algorithmic} also holds for nonsmooth matrix factorization. The result can be summarized in the following proposition. 

\begin{proposition}\label{prop:NMF-bddsgrad}
Nonsmooth matrix factorization with loss function \Cref{eq:nonsmooth-obj} has bounded subgradient trajectories. 
\end{proposition}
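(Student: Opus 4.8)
The plan is to follow the blueprint of \Cref{prop:ms-bddgrad}; the new ingredients are a description of the Clarke subdifferential of the nonsmooth loss and an auto-balancing identity that survives in the nonsmooth regime. Since $f$ is locally Lipschitz and bounded below, \Cref{prop:exist} provides, for any initial point $(X_0,Y_0)$, a subgradient trajectory $(X(\cdot),Y(\cdot))$. The first step is to identify $\partial f$. Writing $f=\ell\circ h$ with $h(X,Y):=XY^{\rm T}-M$ a $C^\infty$ map and $\ell:=\lVert\cdot\rVert_1$ convex, the chain rule for Clarke subdifferentials of a composition with a continuously differentiable map (see \cite[Section~2.3]{clarke1990}) yields
\begin{equation*}
    \partial f(X,Y)\subseteq\big\{(GY,\,G^{\rm T}X)~\big\vert~G\in\partial\ell(XY^{\rm T}-M)\big\},
\end{equation*}
where every entry of such a $G$ lies in $[-1,1]$ (the subdifferential of $\lVert\cdot\rVert_1$ at $A$ consists of matrices $G$ with $G_{ij}=\operatorname{sign}(A_{ij})$ when $A_{ij}\neq 0$ and $G_{ij}\in[-1,1]$ otherwise). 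Hence for almost every $t\geqslant 0$ there is a matrix $G(t)$ with entries in $[-1,1]$ such that $\dot X(t)=-G(t)Y(t)$ and $\dot Y(t)=-G(t)^{\rm T}X(t)$.

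With this in hand, the auto-balancing identity follows exactly as in the smooth case \cite[Theorem~2.2]{du2018algorithmic}. Since $X(\cdot)$ and $Y(\cdot)$ are locally absolutely continuous, so are $t\mapsto X^{\rm T}X$ and $t\mapsto Y^{\rm T}Y$, and the product rule gives, for almost every $t$,
\begin{equation*}
    \frac{d}{dt}\big(X^{\rm T}X-Y^{\rm T}Y\big)=\dot X^{\rm T}X+X^{\rm T}\dot X-\dot Y^{\rm T}Y-Y^{\rm T}\dot Y=0,
\end{equation*}
where the last equality is obtained by substituting $\dot X=-GY$, $\dot Y=-G^{\rm T}X$ with the \emph{same} matrix $G(t)$ and cancelling. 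Consequently $X^{\rm T}X-Y^{\rm T}Y\equiv C$ for some constant matrix $C\in\mathbb{R}^{r\times r}$. Because $f$ admits a chain rule (as noted above), \cite[Lemma~5.2]{davis2020stochastic} shows $t\mapsto f(X(t),Y(t))$ is nonincreasing, so $\lVert XY^{\rm T}-M\rVert_1\leqslant f(X_0,Y_0)=:c_1$ for all $t\geqslant 0$, whence $\lVert XY^{\rm T}\rVert_F\leqslant\lVert XY^{\rm T}\rVert_1\leqslant\lVert M\rVert_1+c_1=:c_2$. Now, just as in the proof of \Cref{prop:ms-bddgrad}, the identity
\begin{equation*}
    \lVert X^{\rm T}X\rVert_F^2+\lVert Y^{\rm T}Y\rVert_F^2=\lVert X^{\rm T}X-Y^{\rm T}Y\rVert_F^2+2\lVert XY^{\rm T}\rVert_F^2\leqslant\lVert C\rVert_F^2+2c_2^2
\end{equation*}
together with the Cauchy--Schwarz bound $\lVert X\rVert_F^4+\lVert Y\rVert_F^4\leqslant r\big(\lVert X^{\rm T}X\rVert_F^2+\lVert Y^{\rm T}Y\rVert_F^2\big)$ forces $X$ and $Y$ to remain bounded.

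The main obstacle I expect is the first step: pinning down the Clarke subdifferential in the ``balanced'' product form, i.e.\ verifying that the relevant chain-rule inclusion applies and that it supplies, at almost every $t$, a single multiplier $G(t)$ representing both $\dot X(t)$ and $\dot Y(t)$ (this is what makes the cancellation in the auto-balancing computation work). After that, the argument is the same norm bookkeeping as in \Cref{prop:ms-bddgrad}, with $\lVert\cdot\rVert_1$ playing the role of the measurement residual and the elementary inequality $\lVert\cdot\rVert_F\leqslant\lVert\cdot\rVert_1$ replacing the lower-boundedness of the sensing matrices.
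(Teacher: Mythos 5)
Your proposal is correct and follows essentially the same route as the paper's proof: identify the Clarke subdifferential via the chain rule for compositions (the paper cites \cite[Theorem~2.3.10]{clarke1990} to get $\partial f(X,Y)=\{(\Lambda Y,\Lambda^{\rm T}X):\Lambda\in\mathrm{sign}(XY^{\rm T}-M)\}$, which is the equality version of your inclusion), derive the auto-balancing invariant $X^{\rm T}X-Y^{\rm T}Y\equiv C$ from the crucial fact that a single multiplier $G(t)=\Lambda(t)$ drives both $\dot X$ and $\dot Y$, bound $\|XY^{\rm T}\|_F$ via the descent of $f$ along the trajectory, and close with $\|X^{\rm T}X\|_F^2+\|Y^{\rm T}Y\|_F^2=\|X^{\rm T}X-Y^{\rm T}Y\|_F^2+2\|XY^{\rm T}\|_F^2$. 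The only cosmetic deviation is that the paper runs the last norm bookkeeping through the spectral norm before converting, while you stay in Frobenius norm and apply the same Cauchy--Schwarz step used in Proposition~\ref{prop:ms-bddgrad}.
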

\begin{proof}
Since $f$ is locally Lipschitz and lower bounded, by \Cref{prop:exist} there exists a subgradient trajectory for any initial point. Let $(X_0,Y_0)\in\mathbb{R}^{m\times r}\times\mathbb{R}^{n\times r}$. Consider an absolutely continuous function $Z:[0,\infty)\rightarrow\mathbb{R}^{m\times r}\times \mathbb{R}^{n \times r}$ such that 
\begin{equation*}
    Z'(t) \in -\partial f(Z(t)),~~~\text{for almost every}~ t \geqslant 0, ~~~ \text{and}~ Z(0) = (X_0,Y_0).
\end{equation*}
By \cite[Theorem 2.3.10]{clarke1990}, 
\begin{equation*}
    \partial f(X,Y) = \left\{ \left. \begin{pmatrix}
        \Lambda Y \\ \Lambda^T X 
    \end{pmatrix} ~\right\vert~ \Lambda \in \mathrm{sign}(XY^T-M) \right\}
\end{equation*}
where $\mathrm{sign}$ is an element-wise operation mapping each entry of a matrix to a real number in $[-1,1]$ such that 
\begin{equation*}
\text{sign}(x) :=
\left\{
\begin{array}{cl}
-1 & \text{if} ~ x < 0, \\
\big[-1,1\big] & \text{if} ~ x = 0, \\
1 & \text{if} ~ x > 0.
\end{array}
\right.
\end{equation*}
Hence, with $Z =: (X,Y)$, for almost every $t\geqslant 0$ we have
\begin{subequations}
    \begin{gather}\label{eq:derivativeInclusionXYLambda}
    X'(t) = - \Lambda(t) Y(t), ~~~ Y'(t) = - \Lambda(t)^T X(t), \\
    \Lambda(t) \in \mathrm{sign}(X(t)Y(t)^T-M). 
\end{gather}
\end{subequations}
Consider $\phi:[0,\infty)\rightarrow\mathbb{R}$ defined by $\phi(t) := X(t)^TX(t)-Y(t)^TY(t)$. By taking derivative, we have 
\begin{equation}\label{eq:1stderivativePhi}
    \phi'(t) = X'(t)^TX(t) + X(t)^TX'(t)-Y'(t)^TY(t)-Y(t)^TY'(t). 
\end{equation}
Combining \Cref{eq:derivativeInclusionXYLambda} and \Cref{eq:1stderivativePhi}, we have 
\begin{align*}
    \phi'(t) &= -Y(t)^T \Lambda(t)^T X(t) - X(t)^T\Lambda(t) Y(t) \\
    &\qquad +X(t)^T\Lambda(t)Y(t)+Y(t)^T\Lambda(t)^TX(t) = 0. 
\end{align*}
Hence the continuous function $\phi$ is constant on $[0,\infty)$. Also, we have
\begin{align*}
    \|X^TX-Y^TY\|_F^2 & = \|X^TX\|_F^2 + \|Y^TY\|_F^2 - 2 \langle X^TX,Y^TY \rangle_F \\
    & = \|X^TX\|_F^2 + \|Y^TY\|_F^2 - 2\|XY^T\|_F^2 \\
    & \geqslant \|X^TX\|_2^2 + \|Y^TY\|_2^2 - 2\|XY^T\|_F^2 \\
    & = \|X\|_2^4 + \|Y\|_2^4 - 2 \|XY^T\|_F^2 \\
    & \geqslant \|X\|_2^4 + \|Y\|_2^4 - 2mn \|XY^T\|_1^2 \\
    & \geqslant \|X\|_2^4 + \|Y\|_2^4 - 2mn(\|XY^T-M\|_1+\|M\|_1)^2. 
\end{align*}
Here $\|\cdot\|_2$ denotes the spectral norm. Therefore, for all $t\geqslant 0$, we have
\begin{align*}
    \|X(t)\|_2^4 + \|Y(t)\|_2^4 \leqslant & \|X(t)^TX(t)-Y(t)^TY(t)\|_F^2 \\
    &\qquad +2mn(\|X(t)Y(t)^T-M\|_1+\|M\|_1)^2 \\
    \leqslant & \|X_0^TX_0-Y_0^TY_0\|_F^2 \\ 
    &\qquad +2mn(\|X_0Y_0^T-M\|_1+\|M\|_1)^2.\qedhere
\end{align*}
\end{proof}

Combined with \Cref{prop:NMF-bddsgrad}, \Cref{thm:main-intro} shows that \Cref{eq:nonsmooth-obj} has no spurious setwise local minimum if and only if it has no spurious local minima. Under the condition in \cite[Theorem 1]{josz2021nonsmooth}, i.e., $\rank(M)=1=r$ and all the entries of $M$ are non-zero, \Cref{eq:nonsmooth-obj} reduces to 
\begin{equation}\label{eq:NMFrk1}
    f(x,y) := \sum_{i=1}^m\sum_{j=1}^n \vert x_iy_j-M_{ij}\rvert, 
\end{equation}
where $x\in\mathbb{R}^m$ and $y\in\mathbb{R}^n$. In this case, \Cref{eq:NMFrk1} has no spurious local minima, thus it has no spurious local minima at infinity, and we obtain the last result in \Cref{cor:main}.

\section*{Acknowledgments}
We thank the reviewers and the associate editor for their valuable feedback.

\appendix

\section{Proof of \texorpdfstring{\Cref{lem:bdd-const}}{}}\label{app:lem:bdd-const}
Let $S \subset \mathbb{R}^n$ be a setwise local minimum of a continuous function $f:\mathbb{R}^n\rightarrow\mathbb{R}$. Let $U\supset S$ be an open set such that $f(x)\leqslant f(y)$ for all $x \in S$ and $y\in U\setminus S$. Note that $S$ is closed, so its boundary is defined by $\partial S := S\setminus S^\circ$. Let $z\in\partial S$ and consider any real number $\epsilon>0$. Since $f(z)+(-\epsilon,\epsilon)$ is a neighborhood of $f(z)$, by continuity of $f$, there exists a neighborhood $N(z)$ of $z$ such that $f(N(z))\subset f(z)+(-\epsilon,\epsilon)$. Since $U$ is a neighborhood of $z$, $N'(z):=U \cap N(z)$ is also a neighborhood of $z$ with $f(N'(z))\subset f(z)+(-\epsilon,\epsilon)$. The set $N'(z)\cap S$ is nonempty because $z \in S$ and the set $N'(z)\setminus S$ is nonempty because $z\in\partial S$. For any $x \in N'(z) \cap S$ and $y \in N'(z)\setminus S$, it follows that
\begin{equation*}
     \inf_{U \setminus S} f -\epsilon \leqslant f(y)-\epsilon < f(z) < f(x)+\epsilon \leqslant \sup_S f+\epsilon \leqslant \inf_{U\setminus S} f +\epsilon. 
\end{equation*}
The last inequality follows from the definition of setwise local minima. As $\epsilon>0$ was arbitrary, we deduce that 
\begin{equation*}
\label{inf_sup}
    \inf_{U \setminus S} f = f(z) = \sup_S f.
\end{equation*}
Thus, $f$ is a constant on the boundary of $S$ and $f$ attains its maximum over $S$ on the boundary of $S$.

\section{Proof of \texorpdfstring{\Cref{prop:valley}}{}}\label{app:prop:valley}
\begin{enumerate}[label=(\alph*), listparindent=20pt]
    \item Let $S$ be a setwise local minimum. By \cref{lem:bdd-const}, we know that $c:=\sup_S f=f(z)$ for all $z\in\partial S$. Take a path-connected component $C$ of $S$. Then $C\subset [f\leqslant c] := \{ x \in \mathbb{R}^n ~\vert~ f(x) \leqslant c \}$. Since $C$ is path-connected, there exists a path-connected component $V$ of $[f\leqslant c]$ such that $C\subset V$. By definition, $V$ is a valley. This shows that a path-connected component of a setwise local minimum is a subset of a valley. 
    
    If in addition, $S$ is a strict setwise local minimum, then we distinguish two cases. If $S = \mathbb{R}^n$, then the path-connected component $C$ of $S$ is equal to $\mathbb{R}^n$ and is therefore a valley. Otherwise, it suffices to show that $V\subset S$. Indeed, $V$ is then a path-connected subset of $S$ containing the path-connected component $C$ of $S$, so that by maximality, $V=C$. Therefore $C$ is valley.
    
    Consider an open set $U\supset S$ such that $f(x)>f(y)$ for all $x\in U\setminus S$ and $y\in S$. In order to show that $V\subset S$, it suffices to show that $V\cap(U\setminus S)=\emptyset$ and $V\cap U^c=\emptyset$ because if so, then $V=(V\cap S)\cup(V\cap(U\setminus S))\cup(V\cap U^c)=V\cap S$. Since $f(w)\leqslant c$ for all $w\in V$ and $f(w)>c$ for all $w\in U\setminus S$ (the supremum function value $f(y)=c$ can be attained by some $y\in \partial S\subset S$), we know that $V\cap (U\setminus S)=\emptyset$. Thus, $V=(V\cap S)\cup(V\cap U^c)$. Note that $V\cap S$ is nonempty and closed because $C\subset V\cap S$ and $V$ and $S$ are both closed. Since $V\cap U^c$ is also closed and $V$ is connected, $V\cap U^c$ must be empty. 
    
    \item Let $f:\mathbb{R}^n\rightarrow\mathbb{R}$ be a continuous function and $a\in \mathbb{R}$ be a nonempty sublevel set of $f$. By continuity of $f$, $[f\leqslant a]$ is closed in $\mathbb{R}^n$. Suppose $[f\leqslant a]$ has finitely many connected components $C_1,\ldots,C_k$. Denote $\overline{B}$ as the closure of any set $B\subset\mathbb{R}^n$. Since $C_i$'s are connected, by \cite[Theorem 23.4]{munkrestopology}, $\overline{C_i}$'s are also connected. Since $C_i\subset [f\leqslant a]$, $\overline{C_i}\subset \overline{[f\leqslant a]} = [f\leqslant a]$. By \cite[Theorem 25.1]{munkrestopology}, $\overline{C_i}$ has no intersection with any other $C_j$ for $j\ne i$. Together with the fact that $[f\leqslant a]=\bigcup_{i=1}^kC_i$, we have $\overline{C_i}\subset C_i$, and hence $\overline{C_i}=C_i$. Thus, each $C_i$ is closed in $\mathbb{R}^n$. 
    
    For any fixed $i$, denote $C_{-i}:=[f\leqslant a]\setminus C_i$, then $C_{-i}=\bigcup_{j=1,j\ne i}^k C_j$ is a closed set disjoint with $C_i$. By \cite[Theorem 32.2]{munkrestopology}, there exist disjoint open sets $D,E\subset\mathbb{R}^n$ such that $C_i\subset D$ and $C_{-i}\subset E$. Take $U=D$ in \Cref{def:setwise-local-min}, then $f(x)\leqslant a$ for all $x\in C_i$ because $C_i\subset [f\leqslant a]$. Furthermore, $f(y)>a$ for all $y\in U\setminus C_i$ because $(U\setminus C_i)\cap [f\leqslant a]=\emptyset$. This verifies that $C_i$ is a strict setwise local minimum of $f$. 
\end{enumerate}

\section{Proof of \texorpdfstring{\Cref{prop:char-coercive}}{}}\label{app:prop:char-coercive}
Let $S$ be a spurious setwise local minimum at infinity. Since $\inf_S f > \inf_{\mathbb{R}^n} f$, it must be that $S\neq \mathbb{R}^n$. By \Cref{def:setwise-local-min}, there exists $y \in S^c$ such that $S\subset \{x\in\mathbb{R}^n\,\vert\,f(x)\leqslant f(y)\}$. Since $f$ is coercive, its sublevel sets are bounded and hence $S$ is bounded. $S$ is thus not a spurious local minimum at infinity.

\section{Proof of \texorpdfstring{\Cref{prop:exist}}{}}\label{app:prop:exist}
For a fixed real number $\tau>0$, define a sequence $x_k^\tau$ recurrently by letting $x_0^\tau:=x_0$ and 
\begin{equation*}
    x_{k+1}^\tau \in \arg\min_{x\in\mathbb{R}^n} \left\{f(x)+\frac{\lVert x-x_k^\tau\rVert^2}{2\tau}\right\}, \quad \forall k\in\mathbb{N}. 
\end{equation*}
A solution exists because $f$ is bounded below and the objective function is coercive. Any solution satisfies 
\begin{equation*}
    v_{k+1}^\tau := \frac{x_{k+1}^\tau-x_k^\tau}{\tau} \in -\partial f(x_{k+1}^\tau), \quad \forall k\in\mathbb{N}. 
\end{equation*}
Define two functions $x^\tau,\tilde{x}^\tau:\mathbb{R}_+\rightarrow \mathbb{R}^n$ where $\mathbb{R}_+:=[0,\infty)$ by
\begin{equation*}
    x^\tau(t) := x_{k+1}^\tau,\quad \tilde{x}^\tau(t) := x_k^\tau + (t-k\tau)v_{k+1}^\tau,\quad \forall t\in(k\tau,(k+1)\tau]
\end{equation*}
for all $k\in\mathbb{N}$, with initial condition $x^\tau(0)=\tilde{x}^\tau(0)=x_0$. Note that $\tilde{x}^\tau$ is absolutely continuous because it is piecewise affine. On the contrary, $x^\tau$ is not continuous. Also, define $v^{\tau}:\mathbb{R}_+\rightarrow\mathbb{R}^n$ by 
\begin{equation*}
    v^\tau(t) := v_{k+1}^\tau, \quad \forall t\in(k\tau,(k+1)\tau],\quad \forall k\in\mathbb{N}, 
\end{equation*}
and choose $v^\tau(0)\in -\partial f(x_0)$. Since  $(\tilde{x}^\tau)'=v^\tau$ on $(k\tau,(k+1)\tau)$ for all $k \in \mathbb{N}$, and $v^\tau(t)\in -\partial f(x^\tau(t))$ for all $t\geqslant 0$, we conclude that $(\tilde{x}^\tau)'(t)\in -\partial f(x^\tau(t))$ for almost every $t\in\mathbb{R}_+$. By optimality of $x_{k+1}^\tau$, we have 
\begin{equation*}
    f(x_{k+1}^\tau) + \frac{\lVert x_{k+1}^\tau-x_k^\tau\rVert^2}{2\tau} \leqslant f(x_k^\tau), \quad \forall k\in\mathbb{N}. 
\end{equation*}
For any $l\in\mathbb{N}$, we have 
\begin{equation*}
    \sum_{k=0}^l\frac{\lVert x_{k+1}^\tau-x_k^\tau\rVert^2}{2\tau} \leqslant f(x_0^\tau) - f(x_{l+1}^\tau) \leqslant f(x_0) - \inf_{\mathbb{R}^n} f =: C < \infty
\end{equation*}
since $f$ is bounded below. Observe that 
\begin{equation*}
    \sum_{k=0}^l\frac{\lVert x_{k+1}^\tau-x_k^\tau\rVert^2}{2\tau} = \sum_{k=0}^l\frac{\tau}{2}\lVert v_{k+1}^\tau\rVert^2 = \frac{1}{2}\sum_{k=0}^l\int_{k\tau}^{(k+1)\tau}\left\lVert (\tilde{x}^\tau)'(t)\right\rVert^2dt. 
\end{equation*}
Fix $T \geqslant 0$ from now on. From the above, we have
\begin{equation}\label{eq:L2-bounded}
    \int_0^T\left\lVert (\tilde{x}^\tau)'(t)\right\rVert^2dt \leqslant \sum_{k=0}^{\lfloor T/\tau\rfloor}\int_{k\tau}^{(k+1)\tau}\left\lVert (\tilde{x}^\tau)'(t)\right\rVert^2dt \leqslant 2C. 
\end{equation}
Since $\tilde{x}^\tau$ is absolutely continuous, for any $s,t\in [0,T]$ we have
\begin{subequations}\label{eq:1/2-holder}
    \begin{align}
    \|\tilde{x}^\tau(t)-\tilde{x}^\tau(s)\| &= \left\|\int_s^t(\tilde{x}^\tau)'(u)\;du\right\| \\
    &\leqslant \left(\int_0^T\left\lVert (\tilde{x}^\tau)'(t)\right\rVert^2dt\right)^{1/2}\vert t-s\vert^{1/2} \leqslant \sqrt{2C}\vert t-s\vert^{1/2} \label{eq:1/2-holder_cs} 
\end{align}
\end{subequations}
where we use the Cauchy-Schwarz inequality. Now one can see $(\tilde{x}^\tau)_{\tau>0}$ is a family of uniformly bounded and equicontinuous functions on the compact interval $[0,T]$. Therefore, by Arzel\`a-Ascoli theorem \cite[Theorem 7.25]{rudin1964principles}, there exists a sequence of positive reals $(\tau_k)_{k\in\mathbb{N}}$ such that $\tau_k\to 0$ and $\tilde{x}^{\tau_k}\to x^*$ uniformly on $[0,T]$ as $k\to\infty$. For all $k\in \mathbb{N}$ and $t\in (k\tau,(k+1)\tau]$, we have $\tilde{x}^\tau((k+1)\tau)=x_k^\tau + \tau v^\tau_{k+1} = x^\tau_{k+1} = x^\tau(t)$. Thus $\|\tilde{x}^\tau(t)-x^\tau(t)\| = \|\tilde{x}^\tau(t)-\tilde{x}^\tau((k+1)\tau)\| \leqslant \sqrt{2C}\tau^{1/2}$ for all $t\in [0,T]$ where the inequality is due to \Cref{eq:1/2-holder} (take $s:=(k+1)\tau$).
Combined with the fact that $\tilde{x}^{\tau_k}\to x^*$ uniformly on $[0,T]$, one can see that $x^{\tau_k}\to x^*$ uniformly on $[0,T]$. Since \cref{eq:L2-bounded} implies that $((\tilde{x}^{\tau_k})')_{k\in\mathbb{N}}$ is a bounded sequence in $L^2([0,T],\mathbb{R}^n)$, there exists a subsequence $(\tau_{k_j})_{j\in\mathbb{N}}$ such that $(\tilde{x}^{\tau_{k_j}})'\to v^*$ weakly in $L^1([0,T],\mathbb{R}^n)$ as $j\to\infty$ by \cite[Corollary 14 p. 413]{Fitzpatrick2010}. Since $\tilde{x}^{\tau_{k_j}}$ is absolutely continuous, for all $t\in[0,T]$, we have 
\begin{equation*}
    \tilde{x}^{\tau_{k_j}}(t) - \tilde{x}^{\tau_{k_j}}(0) = \int_0^t(\tilde{x}^{\tau_{k_j}})'(u)\;du. 
\end{equation*}
Take $j\to\infty$ on both sides, we have 
\begin{equation*}
    x^*(t) - x^*(0) = \int_0^tv^*(u)\;du, 
\end{equation*}
where the convergence of the integral relies on the fact that the constant functions equal to the canonical basis of $\mathbb{R}^n$ lie in $L^\infty([0,T],\mathbb{R}^n)$. Thus, $x^*$ is absolutely continuous and $(x^*)'(t)=v^*(t)$ for almost every $t\in[0,T]$. Recall that for all $k\in \mathbb{N}$, it holds for almost every $t\in[0,T]$ that
\begin{equation*}
    (\tilde{x}^{\tau_k})'(t) = v^{\tau_k}(t) \in -\partial f(x^{\tau_k}(t)). 
\end{equation*}
Since $f$ is locally Lipschitz, the set-valued function $-\partial f$ is upper semicontinuous \cite[2.1.5 Proposition (d) p. 29]{clarke1990} with nonempty compact values \cite[2.1.2 Proposition (a) p. 27]{clarke1990}, hence proper upper hemicontinuous \cite[Proposition 1 p. 60]{aubin1984differential}. In addition, $x^{\tau_k}\to x^*$ uniformly on $[0,T]$ and $(\tilde{x}^{\tau_k})'\to (x^*)'$ weakly in $L^1([0,T],\mathbb{R}^n)$. Therefore,  $(x^*)'(t)\in-\partial f(x^*(t))$ for almost all $t\in[0,T]$ by \cite[Theorem 1 p. 60]{aubin1984differential}\footnote{In the theorem we take $F:=-\partial f$, $X=Y:=\mathbb{R}^n$, and $I:=[0,T]$.}. The initial condition also holds since $\tilde{x}^{\tau}(0)=x_0$ for all $\tau>0$. 
\vskip 1ex
We have proved that for any initial point $x_0$, there exists $x^*:[0,T]\rightarrow\mathbb{R}^n$ such that $(x^*)'(t)=-\partial f(x^*(t))$ holds for almost every $t\in [0,T]$ with any $T>0$. Since $T$ is independent of $x_0$, by setting $T=1$, there exists a sequence of absolutely continuous functions $(x_k)_{k\in\mathbb{N}}$ such that 
\begin{equation*}
    x_k'(t)\in -\partial f(x_k(t)), \quad \mathrm{for~a.e.}~t \in [0,1], \quad x_k(0) = x_{k-1}(1), 
\end{equation*}
for all $k\in\mathbb{N}$ where $x_{-1}(0)=x_0$. Therefore, the desired function $x:[0,\infty)\rightarrow \mathbb{R}^n$ can be defined in a piecewise fashion by 
\begin{equation*}
    x(t) := x_k(t-k),\quad t\in[k,k+1),\quad \forall k\in\mathbb{N}. 
\end{equation*}
By construction, $x$ is absolutely continuous on any compact interval $[a,b]\subset[0,\infty)$. 

\bibliographystyle{abbrv}    
\bibliography{references}

\end{document}